\documentclass[12pt,a4paper,intlimits,sumlimits]{amsart}

\usepackage[usenames,dvipsnames,svgnames,table,rgb]{xcolor}
\usepackage{graphicx,txfonts,latexsym,fancyhdr,calc}
\usepackage{amsfonts,amsmath,amsthm,amssymb}
\usepackage[
allcolors=blue,allbordercolors=blue,pdfborderstyle={/S/U/W 1}]{hyperref}
\usepackage[ocgcolorlinks]{ocgx2}
\usepackage{epigraph}
\usepackage{tcolorbox}
\usepackage{asymptote}
\usepackage{parskip}
\usepackage{pgfgantt}
\usepackage{tcolorbox}
\usepackage{xfrac}

\ifdefined\SMART 
\usepackage[paperwidth=12cm,paperheight=24cm,left=5mm,right=5mm,top=10mm,bottom=5mm]{geometry}
\else
\usepackage[left=2cm,right=2cm,bottom=2cm,top=2cm]{geometry} 
\fi

\usepackage[normalem]{ulem}

\usepackage{txfonts,pxfonts,tikz} 
\usepackage{tgschola}
\usepackage[T1]{fontenc}

\theoremstyle{plain}
\newtheorem{theorem}{Theorem}

\newtheorem{lemma}{Lemma}
\newtheorem{corollary}{Corollary}
\newtheorem{proposition}{Proposition}

\theoremstyle{definition}

\newtheorem{remark}{Remark}

\newtheorem{example}{Example}

\newtheorem{case[theorem]}{Case}

\newcommand{\beql}[1]{\begin{equation}\label{#1}}
\newcommand{\eeq}{\end{equation}}
\newcounter{rem}
\newcounter{step}
\newcounter{mysec}
\newcounter{mysubsec}[mysec]
\title{Spectrality of Weighted Measures on Two Line Segments}
\author{Sha Wu}
\address{\href{https://mi.gxu.edu.cn/}{School of Mathematics, Guangxi University}, 530004  Nanning, China}
\email{shawmath@gxu.edu.cn}
\thanks{ }

\begin{document}
\makeatletter
\@namedef{subjclassname@2020}{\textup{2020} Mathematics Subject Classification}
\makeatother
\subjclass[2020]{42C15, 42C30}
\date{23 September 2026}

\keywords{Spectral measures, positive densities, line segments, spectrality} 
 \begin{abstract}
We study the spectrality of measures with positive integrable
densities supported on two line segments in \(\mathbb R^d\).
We prove that, when the two segments are non-overlapping,
spectrality forces the density on each segment to be constant
almost everywhere.  When the two segments are overlapping, spectrality forces the
total density to be constant almost everywhere on their union.
We then study the resulting measures with positive constant
densities according to the geometric of the
segments. For two non-coplanar segments, every such measure
admits a spectrum contained in a straight line. For two segments lying on distinct parallel lines, the measure is spectral if and only if the two
densities are equal. For non-parallel coplanar segments, a suitable
invertible linear transformation converts the  measure into an unweighted arc-length measure. When these segments are viewed in their affine plane, every spectrum of a spectral measure is contained in a straight
line.  Finally, we give examples showing how the densities determine the
directions of line spectra and construct explicit spectra
for weighted measures.
\end{abstract}
 \maketitle
\tableofcontents

\section{Introduction}
\subsection{Spectral measures} The classical Fourier series on the unit interval provides one of the
simplest and most fundamental examples of an orthonormal exponential
basis. In fact,
 $\{e^{2\pi i n x}:n\in\mathbb Z\}$
forms an orthonormal basis for \(L^2([0,1])\). This idea can be extended to more general measures, leading naturally to the notion of spectral measures. 
Let \(\mu\) be a Borel probability measure on \(\mathbb R^d\). 
The measure \(\mu\) is called a \emph{spectral measure} if there exists
a countable set \(\Lambda\subset\mathbb R^d\) such that
 $ \{e_{\lambda}:=e^{2\pi i\langle\lambda,x\rangle}\}_{\lambda\in\Lambda}$
forms an orthonormal basis for \(L^2(\mu)\). In this case,
\(\Lambda\) is called a \emph{spectrum} of \(\mu\), and the pair
\((\mu,\Lambda)\) is called a \emph{ spectral pair}.  In particular, if the  Lebesgue measure restricted on the set $\Omega$ is  a spectral measure,  then we say $\Omega$ is a \emph{ spectral set}.
The orthogonality of exponential functions can be determined by the zeros of the Fourier transform of  $\mu$. Indeed, for any distinct \(\lambda,\lambda'\in\Lambda\),
\[
\begin{aligned}
    \langle e_\lambda,e_{\lambda'}\rangle_{L^2(\mu)}
     =
    \int_{\mathbb R^d}
    e^{2\pi i\langle\lambda-\lambda',x\rangle}\,d\mu(x) =
    \widehat{\mu}(\lambda'-\lambda).
\end{aligned}
\]
Consequently, \(\Lambda\) is an orthogonal set for \(\mu\) if and
only if
   $ (\Lambda-\Lambda)\setminus\{0\}
    \subseteq\mathcal Z(\widehat{\mu})$,
where
\[
    \mathcal Z(\widehat{\mu})
    =
    \{\xi\in\mathbb R^d:\widehat{\mu}(\xi)=0\}.
\]
And the completeness of
\(E(\Lambda)\) can equivalently be expressed by
\[
    \sum_{\lambda\in\Lambda}
    \left|
        \widehat{\mu}(\xi+\lambda)
    \right|^2
    =
    1 \]
 for  $ \xi\in\mathbb R^d$.
Thus, the study of spectral measures is closely connected with the
geometry of the zero set of the Fourier transform and the distribution
of discrete frequency sets.

The notion of spectral measures extends the classical framework of spectral sets from Lebesgue measure on subsets of Euclidean space to more general Borel measures, including singular and fractal measures. A decisive development was the work of Jorgensen and
Pedersen, who constructed singular continuous self-similar measures, namely $\frac{1}{4}$-Cantor measures admitting orthonormal bases of exponential functions
\cite{JorgensenPedersen1998}. Since then, the spectrality of various classes of measures has been extensively studied, including self-similar measures, self-affine measures, infinite convolution measures and Moran measures (see \cite{AHH19,AH14,AHL22,DHL2013,DHL2014,DL21,DL23,ducasse2025spectral,DHL2019,FFS16,FY97,FHL15,FHW18,kolountzakis2025non}).

A natural generalization of an orthonormal basis is a Fourier frame.
Let \(\mu\) be a finite Borel measure on \(\mathbb R^d\). A countable
set \(\Lambda\subset\mathbb R^d\) is called a \emph{  frame
spectrum} for \(L^2(\mu)\) if there exist constants \(0<A\leq B<\infty\)
such that
\[
    A\|f\|_{L^2(\mu)}^2
    \leq
    \sum_{\lambda\in\Lambda}
    \left|
        \int_{\mathbb R^d}
        f(x)e^{-2\pi i\langle\lambda,x\rangle}\,d\mu(x)
    \right|^2
    \leq
    B\|f\|_{L^2(\mu)}^2
\]
for every \(f\in L^2(\mu)\).  In this case, \(E(\Lambda)=\{e_\lambda:\lambda\in\Lambda\}\) is called
a \emph{Fourier frame} for \(L^2(\mu)\), and \(\mu\) is called an
\emph {frame spectral measure} with frame spectrum \(\Lambda\). 
If the two frame bounds can be chosen to be equal, that is, if there
exists a constant \(A>0\) such that
\[
    \sum_{\lambda\in\Lambda}
    \left|
        \int_{\mathbb R^d}
        f(x)e^{-2\pi i\langle\lambda,x\rangle}\,d\mu(x)
    \right|^2
    =
    A\|f\|_{L^2(\mu)}^2,
    \qquad f\in L^2(\mu),
\]
then \(E(\Lambda)\) is called a \emph{tight  frame}. When
\(A=1\), it is called a \emph{Parseval  frame}. Every
orthonormal basis is a Parseval frame, although a Parseval frame need
not be an orthonormal basis, since a frame may contain redundancy.
The uniformity results of Dutkay and Lai \cite{DL2014} show that the existence of a
Fourier frame imposes strong restrictions on the density of an
absolutely continuous measure. In particular, the existence of a tight Fourier frame of exponential functions forces the
density to be constant on its support. These results demonstrate that the existence of a Fourier frame is
closely related to the structure of the underlying measure and may
impose strong restrictions on its density and distribution.

\subsection{Translational tiling and the Fuglede conjecture}
Let \(\Omega\subset\mathbb R^d\) be a measurable set of
positive and finite Lebesgue measure. We say that \(\Omega\) tiles
\(\mathbb R^d\) by translations if there exists a discrete set
\(\mathcal T\subset\mathbb R^d\) such that
\[
    \sum_{\tau\in\mathcal T}
    \mathbf 1_\Omega(x-\tau)
    =
    1
\]
for almost every \(x\in\mathbb R^d\). Equivalently, the translates
\(\{\Omega+\tau:\tau\in\mathcal T\}\) cover \(\mathbb R^d\), up to a
set of measure zero, and any two distinct translates overlap only on
sets of measure zero. In this case, \(\Omega\) is called a
\emph{translational tile}, and \(\mathcal T\) is called  a \emph{translation set} for \(\Omega\).
Fuglede proposed in 1974 the following conjecture
\cite{fuglede1974operators}:
\[\bf{
    \Omega\text{ \bf is a spectral set}
    \quad\bf{\Longleftrightarrow}\quad
    \Omega\text{ \bf  is a translational tile} }
\]
Fuglede proved this equivalence when either the spectrum or the tiling
set is a lattice. In this case, if \(\Omega\) is a fundamental domain
of a full-rank lattice \(\Gamma\), then the dual lattice
\[
    \Gamma^*
    =
    \{\lambda\in\mathbb R^d:
      \langle\lambda,\gamma\rangle\in\mathbb Z
      \text{ for every }\gamma\in\Gamma\}
\]
is a spectrum of \(\Omega\). However, this equivalence does not hold in general.  Tao \cite{tao2004fuglede}
constructed a spectral set that does not translational tile in dimensions
\(d\geq5\). Subsequent constructions showed that both
directions of the conjecture fail in every dimension \(d\geq3\)  (see 
\cite{farkas2006onfuglede,farkas2006tiles,kolountzakis2006tiles,kolountzakis2006hadamard}).  More recently, counterexamples to both
directions in dimension two were announced in \cite{Zhang2026}.
The conjecture therefore remains open  in
dimension one.

Despite these counterexamples, the equivalence remains valid for
several important classes. For example, \L aba \cite{laba2001} proved that the Fuglede
conjecture holds for a union of two intervals in \(\mathbb R\). Lev and Matolcsi \cite{LevMatolcsi2022} established the conjecture for
convex bodies in all dimensions. Various
positive results are also known for finite abelian groups, lattice
tilings, product domains, and other geometrically restricted classes.

\subsection{Spectral measures supported on two line segments}
The classical example in dimension one is the unit interval
\(\Omega=[0,1]\), which is both a spectral set and a translational
tile. More precisely, the integer   \(\mathbb Z\) serves both as
a spectrum of \(\Omega\) and as a translation set that tiles
\(\mathbb R\) by copies of \(\Omega\). 
This fundamental example naturally raises the question of whether the connection between spectrality and translational tiling continues to hold for more general sets, particularly for unions of two intervals. An important result in this direction was obtained by \L aba
\cite{laba2001}, who proved that the Fuglede conjecture holds for a union of two disjoint intervals in \(\mathbb R\).
From a geometric point of view, a union of two intervals may be regarded as a pair of collinear line segments. This naturally suggests extending the problem to two line segments in the plane with arbitrary relative positions and directions. Unlike the collinear case, the corresponding arc-length measure is singular with respect to two-dimensional Lebesgue measure. The study of such measures was initiated by Lai, Liu, and Prince
\cite{lai2021spectral}, they considered the normalized
arc-length measure on two line segments obtained by placing the same
interval \([t,t+1]\) on the two coordinate axes. For \(t=0\), they
proved that this measure has the unique spectrum
\[
    \Lambda=\left\{\left(\frac n2,-\frac n2\right):n\in\mathbb Z\right\}
\]
up to translations. They also established non-spectrality when \(t=-\frac12+\frac{1}{2a}\) for an integer \(a>1\).
Ai, Lu, and Zhou \cite{ai2023spectrality}  further considered the normalized arc-length measure on two line segments obtained by placing the same interval \([t,t+1]\) on the two coordinate axes. For \(t\in\mathbb Q\setminus\{-\frac12\}\), they proved that this measure is spectral if and only if \(t\in\frac12\mathbb Z\).
  Kolountzakis and the author
\cite{kolountzakis2025spectrality} extended this investigation by establishing
non-spectrality for \(-\frac12<t<0\) and for irrational \(t\), and by
giving a necessary and sufficient condition for the existence of a
spectrum contained in a straight line. More recently, Kolountzakis, Shi, and the author \cite{KSW} proved that the arc-length measure supported on two arbitrary line segments in the plane is spectral if and only if it admits a spectrum contained in a straight line. Moreover, when the two segments are non-parallel, every spectrum is contained in a straight line. As observed by Kolountzakis and the author
\cite{kolountzakis2025spectrality},  arc-length supported on two line
segments can be spectral even when the densities on the
two segments are different constants.
This naturally leads to the following question: {\bf if the
densities on the two segments are positive integrable
functions, under what conditions is the corresponding
measure spectral?}

\subsection{ Main results} 
To address the question above, we consider measures supported on
two line segments with positive integrable densities. Let
\(I_1,I_2\subset\mathbb R^2\) be two non-degenerate line segments,
and let
\[
    f_j\in L^1(I_j,\mathcal H^1)
    \qquad \text{and} \qquad
    f_j>0
    \quad \mathcal H^1\text{-a.e. on }I_j  
\]
for $ j=1,2$. We define
\begin{equation}\label{eq:weighted-segment-measure}
    \mu
    =
    f_1\,\mathcal H^1\!\restriction_{I_1}
    +
    f_2\,\mathcal H^1\!\restriction_{I_2},
\end{equation}
where \(\mathcal H^1\) denotes the one-dimensional Hausdorff
measure, which agrees with arc-length on each segment, and
\(\mathcal H^1\!\restriction_{I_j}\) denotes its restriction
to \(I_j\). Thus, \(f_j\) is the density of the contribution
from \(I_j\) with respect to arc-length. We assume that
\[
    \int_{I_1} f_1\,d\mathcal H^1
    +
    \int_{I_2} f_2\,d\mathcal H^1
    =1,
\]
so that \(\mu\) is a Borel probability measure.
In expressions involving both densities, each \(f_j\) is
extended by zero outside \(I_j\).

Our first result gives necessary conditions on the densities
for \(\mu\) to be spectral measure. If the two segments intersect in a set of measure zero, each density must be constant almost everywhere. If they overlap in a set of positive measure, their total density must be constant almost everywhere on the union.
\begin{theorem}\label{thm:density}
Let \(I_1,I_2\subset\mathbb R^d\), \(d\geq2\), be two
non-degenerate line segments. For \(j=1,2\), let
\[
    f_j\in L^1(I_j,\mathcal H^1) 
     \qquad \text{and} \qquad
    f_j>0
    \quad \mathcal H^1\text{-a.e. on }I_j,
\]
and let $\mu
    =
    f_1\mathcal H^1\!\restriction_{I_1}
    +
    f_2\mathcal H^1\!\restriction_{I_2}$
be a Borel probability measure. If \(\mu\) is a spectral
measure, then the following conclusions hold:
\begin{enumerate}
    \item[(i)]
    If \(\mathcal H^1(I_1\cap I_2)=0\), then there exist
    constants \(c_1,c_2>0\) such that
    \[
        f_j=c_j
        \quad \mathcal H^1\text{-a.e. on }I_j 
        \quad \text{for} \ j=1,2.
    \]
    Equivalently,
    $\mu
        =
        c_1\mathcal H^1\!\restriction_{I_1}
        +
        c_2\mathcal H^1\!\restriction_{I_2}.$
    \item[(ii)]
    If \(\mathcal H^1(I_1\cap I_2)>0\), then
    \[
        f_1\mathbf 1_{I_1}+f_2\mathbf 1_{I_2}
        =
        \frac{1}{\mathcal H^1(I_1\cup I_2)}
        \quad
        \mathcal H^1\text{-a.e. on }I_1\cup I_2.
    \]
    Equivalently,
        $\mu
        =
        \frac{1}{\mathcal H^1(I_1\cup I_2)}
        \mathcal H^1\!\restriction_{I_1\cup I_2}$, where
 \(\mathbf 1_{I_j}\) denotes the indicator function
    of \(I_j\).
\end{enumerate}
\end{theorem}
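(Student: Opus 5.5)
Our plan is to reduce the problem to the uniformity principle of Dutkay and Lai \cite{DL2014}. An orthonormal basis is a tight frame, and for absolutely continuous measures a tight Fourier frame forces the density to be constant on its support. The measure $\mu$, however, is singular with respect to Lebesgue measure on $\mathbb R^d$. So the first step is to move to an absolutely continuous setting in which the densities $f_j$ can still be read off.

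The natural route is through the ``Parseval identity restricted to translates'' mechanism behind the Dutkay--Lai argument. Let $\Lambda$ be a spectrum. For every $f\in L^2(\mu)$ and every $t\in\mathbb R^d$, Parseval applied to $f e_{-t}$ gives
\[
\sum_{\lambda\in\Lambda}\Bigl|\int f(x)e^{-2\pi i\langle \lambda+t,x\rangle}\,d\mu(x)\Bigr|^2=\|f\|^2_{L^2(\mu)}.
\]
We will test this identity with $f=\mathbf 1_{E}$, where $E$ is a short subsegment of $I_1$. We then average in $t$ over a large cube $[-R,R]^d$ and let $R\to\infty$. The averaged left side is governed by the asymptotic distribution of $\Lambda$ (its upper and lower Beurling densities) paired with $|\widehat{\mathbf 1_E\mu}|^2$.

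The difficulty is that $\widehat{\mathbf 1_E\mu}$ is not in $L^2(\mathbb R^d)$ for $d\ge2$, since it does not decay in directions orthogonal to the segment. To avoid this, we instead project. Fix a direction $u$ and consider the push-forward $\pi_u\mu$ onto the line $\mathbb R u$, given by $x\mapsto\langle x,u\rangle$. Its Fourier transform is the restriction of $\widehat\mu$ to $\mathbb R u$. For a generic $u$, not orthogonal to either segment, $\pi_u\mu$ is absolutely continuous on $\mathbb R$, with density
\[
g_u(s)=\frac{f_1(\gamma_1(s))}{|\langle v_1,u\rangle|}+\frac{f_2(\gamma_2(s))}{|\langle v_2,u\rangle|},
\]
where $v_j$ is the unit direction of $I_j$ and $\gamma_j$ is the inverse parametrization. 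The plan is to show that the frame inequality for $\mu$ with spectrum $\Lambda$ induces, on functions of the form $h(\langle x,u\rangle)$ composed with the projection, a tight-frame-type identity. Then we run the Dutkay--Lai density argument fibrewise: the local weight $f_j$ at a point is detected by testing against functions concentrated on small neighbourhoods of that point.

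Concretely, we localize using functions $\phi(x)$ supported in a small ball around a Lebesgue point $x_0$ of $f_1$ lying in $I_1\setminus I_2$. We compare the averaged Parseval sums at two Lebesgue points $x_0,x_1$. The averaged sum equals $D(\Lambda)\int|\phi|^2 f_1^{2}\,d\mathcal H^1$ up to errors, while the right side equals $\int|\phi|^2 f_1\,d\mathcal H^1$. Taking the ratio shows that $f_1(x_0)$ is independent of $x_0$, provided the densities of $\Lambda$ along the relevant averaging are well defined. Applied to a measure on a segment, this is where upper and lower densities must coincide: the Landau-type inequalities give $D^-(\Lambda)f_1\ge1$ and $D^+(\Lambda)f_1\le1$ essentially everywhere, which forces $f_1$ to be constant.

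The main obstacle is making this averaging legitimate for a singular measure. Since $\Lambda\subset\mathbb R^d$ may lie on a line, or be sparse in some directions, the averaging should be done only along the direction $v_1$ of the segment. That is, we replace $t$ by $t v_1$ and project $\Lambda$ orthogonally onto $\mathbb R v_1$. One must then check that contributions from $I_2$ do not interfere. In case (i) they do not, because a localized test function supported near $x_0\in I_1$ with $\mathcal H^1(I_1\cap I_2)=0$ vanishes $\mu$-a.e. on $I_2$, so the computation involves only $f_1$. Symmetrically this gives $f_2=c_2$.

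In case (ii) the two segments are collinear and overlap. Then $\mu$ is an absolutely continuous measure on a one-dimensional line with density $F=f_1\mathbf 1_{I_1}+f_2\mathbf 1_{I_2}$ supported on the interval $I_1\cup I_2$. Projecting $\Lambda$ onto that line and applying the one-dimensional Dutkay--Lai theorem directly yields that $F$ is constant a.e. Normalizing to total mass $1$ gives $F=1/\mathcal H^1(I_1\cup I_2)$.
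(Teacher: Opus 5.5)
Your final route is the paper's: a test function supported on one segment vanishes $\mu$-a.e.\ on the other (in case (ii) everything lives on the common line), and on that segment the exponentials depend only on $\langle\lambda,v_j\rangle$. Hence $\{e^{2\pi i t\langle\lambda,v_j\rangle}\}_{\lambda\in\Lambda}$ is a Parseval frame for $L^2([0,T_j],w_j(t)\,dt)$, and \cite[Corollary~2.6]{DL2014} can be applied directly in case (i), just as you do in case (ii).

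The generic-$u$ projection is an unnecessary detour. Your hand-rolled replacement for the citation is also misstated. Averaging the translated Parseval identity over all centers gives only $D^-(\Lambda')f_1\le 1\le D^+(\Lambda')f_1$, where $\Lambda'$ denotes the projected frequencies, and this does not force $f_1$ to be constant. The correct mechanism is your ``ratio'' comparison of two test functions along one common sequence $R_n\to\infty$. That gives $f_1=1/D$ for every limit point $D$ of $\#(\Lambda'\cap[-R_n,R_n])/(2R_n)$.
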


By above Theorem, when \(\mathcal H^1(I_1\cap I_2)=0\), the
spectrality problem of the measure $\mu$ by defined \eqref{eq:weighted-segment-measure}  reduces to the study of measures of the form
\[
    \mu
    =
    c_1\mathcal H^1\!\restriction_{I_1}
    +
    c_2\mathcal H^1\!\restriction_{I_2},
    \qquad c_1,c_2>0.
\]
Any two line segments in \(\mathbb R^d\) are contained in an affine
subspace of dimension at most three. Thus, they are either coplanar or
non-coplanar, and the latter case is intrinsically three-dimensional.
We first consider the non-coplanar case. The following theorem shows
that every measure of the above form supported on two non-coplanar line
segments is spectral and admits a spectrum contained in a straight line.
\begin{theorem}\label{thm:skew-segments}
Let \(I_1,I_2\subset\mathbb R^d (d\geq3)\)  be two
 non-coplanar line segments, and let
    $$\mu
    =
    c_1\mathcal H^1\!\restriction_{I_1}
    +
    c_2\mathcal H^1\!\restriction_{I_2},
    \qquad c_1,c_2>0$$
be a Borel probability measure. Then \(\mu\) is a spectral
measure and admits a spectrum contained in a straight line.
\end{theorem}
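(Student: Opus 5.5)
The plan is to realize $\mu$ as an isometric copy of Lebesgue measure on $[0,1]$ by means of a single linear functional $x\mapsto\langle w,x\rangle$. If this works, the spectrum $\mathbb Z$ of $[0,1]$ lifts to the spectrum $\mathbb Z w$ of $\mu$, which lies on a straight line.

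\textbf{Parametrization and choice of $w$.} Write
\[
I_1=\{a+tu:0\le t\le L_1\},\qquad I_2=\{b+sv:0\le s\le L_2\},
\]
with $|u|=|v|=1$ and $L_j=\mathcal H^1(I_j)>0$. Since $\mu$ is a probability measure, $c_1L_1+c_2L_2=1$. The segments are non-coplanar, so the vectors $u$, $v$, $b-a$ are linearly independent. Indeed, if they spanned at most a plane, both segments would lie in the affine plane through $a$ spanned by them. Since $d\ge 3$, we can therefore choose $w\in\mathbb R^d$ solving the three linear equations
\[
\langle w,u\rangle=c_1,\qquad \langle w,v\rangle=c_2,\qquad \langle w,b-a\rangle=c_1L_1 .
\]
Define $P(x)=\langle w,x-a\rangle$. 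Then
\[
P(a+tu)=c_1t,\qquad P(b+sv)=c_1L_1+c_2s .
\]
Hence $P$ maps $I_1$ affinely and bijectively onto $[0,c_1L_1]$, and $I_2$ onto $[c_1L_1,\,c_1L_1+c_2L_2]=[c_1L_1,1]$.

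\textbf{Push-forward and transfer of the spectrum.} Next compute $P_*\mu$. On $I_1$ the arc-length element $dt$ maps to $dy=c_1\,dt$, so $P_*(c_1\mathcal H^1\!\restriction_{I_1})$ has density $c_1\cdot c_1^{-1}=1$ on $[0,c_1L_1]$. Similarly, the contribution of $I_2$ is Lebesgue measure on $[c_1L_1,1]$. Thus $P_*\mu=\mathbf 1_{[0,1]}\,dy$.

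The two images meet only at the single point $c_1L_1$, so $P$ is injective off a $\mu$-null set, with a Borel inverse defined piecewise on each image interval. Consequently the composition map
\[
U:L^2([0,1])\to L^2(\mu),\qquad g\mapsto g\circ P,
\]
is a unitary operator. It preserves inner products by the change of variables formula, and it is onto because every $F\in L^2(\mu)$ equals $(F\circ P^{-1})\circ P$ $\mu$-a.e. Moreover,
\[
U(e^{2\pi i n y})=e^{-2\pi i n\langle w,a\rangle}\,e^{2\pi i\langle nw,x\rangle}.
\]
The exponentials $\{e^{2\pi i ny}\}_{n\in\mathbb Z}$ form an orthonormal basis of $L^2[0,1]$, and unimodular constant multiples do not affect this property. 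Therefore $\{e_{nw}\}_{n\in\mathbb Z}$ is an orthonormal basis of $L^2(\mu)$. So $\mu$ is spectral, with spectrum $\Lambda=\mathbb Z w$ contained in the line $\mathbb R w$.

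\textbf{Main obstacle.} There is no genuine analytic difficulty. The step needing care is the geometric one: showing that non-coplanarity is exactly the linear independence of $u$, $v$, $b-a$, which is what makes the three conditions on $w$ simultaneously solvable. This is also where non-coplanarity is essential. In the coplanar case, $b-a$ lies in $\operatorname{span}\{u,v\}$, so the third condition is forced by the first two and generally cannot be met. After that, what remains is checking that $P$ is injective $\mu$-a.e., so that composition with $P$ is unitary and not merely an isometry.
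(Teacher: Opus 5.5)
Your proposal is correct and follows the paper's proof. It uses the same three linear conditions on $w$ (the paper's $u$), the same computation showing that the pushforward is Lebesgue measure on an interval, and produces the same line spectrum $\mathbb Z w$. The only difference is that you verify the unitary equivalence $g\mapsto g\circ P$ directly, whereas the paper normalizes to $w/|w|$ and invokes its projection corollary.
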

In the coplanar case, suppose that the common supporting plane is
\(p+V\), where
\[
    V=\operatorname{span}\{v_1,v_2\},
\]
with \(v_1,v_2\) orthonormal. Define the isometry
\[
    \Phi:p+V\longrightarrow\mathbb R^2,
    \qquad
    \Phi(p+s_1v_1+s_2v_2)=(s_1,s_2).
\]
Let
  $ \widetilde I_j=\Phi(I_j)$ for $ j=1,2$ 
and define
\[
    \widetilde\mu
    =c_1\mathcal H^1|_{\widetilde I_1}
    +c_2\mathcal H^1|_{\widetilde I_2}.
\]
Then \(\mu\) is spectral if and only if
\(\widetilde\mu\) is spectral. However, their spectra
may have different geometric structures, since the
spectrum of \(\mu\) lie in \(\mathbb R^d\), whereas
those of \(\widetilde\mu\) lie in \(\mathbb R^2\). Next, we explain the relation between their spectra. 
For \(x=p+s_1v_1+s_2v_2\),
\[
    e^{2\pi i\langle\lambda,x\rangle}
    =
    e^{2\pi i\langle\lambda,p\rangle}
    e^{2\pi i(s_1\langle\lambda,v_1\rangle
              +s_2\langle\lambda,v_2\rangle)}.
\]
Since the constant factor has modulus one, a spectrum
\(\Lambda\) of \(\mu\) corresponds to the spectrum
\[
    \widetilde\Lambda
    =
    \left\{
        \bigl(\langle\lambda,v_1\rangle,
              \langle\lambda,v_2\rangle\bigr):
        \lambda\in\Lambda
    \right\}
\]
of \(\widetilde\mu\). In other word, the planar spectrum \(\widetilde\Lambda\) is the orthogonal
projection of \(\Lambda\) onto \(V\), expressed in the
coordinates determined by \(v_1,v_2\).
Conversely, if \(\Gamma\subset\mathbb R^2\) is a
spectrum of \(\widetilde\mu\), then
\[
    \Lambda
    =
    \left\{
        \gamma_1v_1+\gamma_2v_2+\eta_\gamma:
        \gamma=(\gamma_1,\gamma_2)\in\Gamma
    \right\}
\]
is a spectrum of \(\mu\) for any choice of
\(\eta_\gamma\in V^\perp\).   Consequently,
when \(d>2\), a line spectrum of \(\widetilde\mu\)
may give a spectrum of \(\mu\) that is not contained
in any straight line.

Having reduced the coplanar case to \(\mathbb R^2\), we now
consider measures supported on two line segments in the plane.
We distinguish three cases according to the relative positions
of the supporting lines: collinear, parallel but distinct,
and non-parallel.

In the collinear case, the measure can be identified with an
absolutely continuous measure on \(\mathbb R\). The uniformity
result of Dutkay and Lai \cite{DL2014}  implies that its density must be constant
almost everywhere on \(J_1\cup J_2\), i.e., \(c_1=c_2\). The problem then reduces to
the spectrality of a union of two intervals, which is
characterized by \L aba  \cite{laba2001}.

We next consider two segments lying on distinct parallel lines.
After a rotation and a translation, we may assume that these
lines are horizontal. The following theorem shows that the
corresponding measure is spectral if and only if the two
constant densities are equal.
\begin{theorem}
\label{prop:weighted-parallel-segments}
Let \(h_1\neq h_2\in\mathbb R\),  
\(a_1,a_2\in\mathbb R\) and \(T_1,T_2,c_1,c_2>0\). And let
\[
    \mu
    =
    c_1
    \left(
        \mathcal L\!\restriction_{[a_1,a_1+T_1]}
        \times\delta_{h_1}
    \right)
    +
    c_2
    \left(
        \mathcal L\!\restriction_{[a_2,a_2+T_2]}
        \times\delta_{h_2}
    \right),
\]
is a Borel probability measure, where \(\mathcal L\) denotes the one-dimensional Lebesgue measure. Then $\mu$ is a spectral measure if and only if  $c_1=c_2=\frac{1}{T_1+T_2}$.
 \end{theorem}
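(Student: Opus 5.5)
The plan is to prove sufficiency by an explicit line spectrum obtained from a shear, and necessity by restricting the Parseval identity to functions supported on one segment and comparing the resulting tight-frame bounds through a density count.

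Write \(J_j=[a_j,a_j+T_j]\). For \(\lambda=(\lambda_1,\lambda_2)\in\mathbb R^2\) and a point \((x,h_j)\) of the \(j\)-th segment,
\[
e_\lambda(x,h_j)=e^{2\pi i\lambda_2 h_j}\,e^{2\pi i\lambda_1 x}.
\]
So \(L^2(\mu)\cong L^2(c_1\mathcal L\!\restriction_{J_1})\oplus L^2(c_2\mathcal L\!\restriction_{J_2})\), and \(e_\lambda\) is the pair \(\bigl(e^{2\pi i\lambda_2h_1}e^{2\pi i\lambda_1x},\,e^{2\pi i\lambda_2h_2}e^{2\pi i\lambda_1x}\bigr)\).

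\emph{Sufficiency.} Assume \(c_1=c_2=c=\frac1{T_1+T_2}\).
1. Since \(h_1\neq h_2\), choose \(\kappa\in\mathbb R\) with \(a_1+\kappa h_1+T_1=a_2+\kappa h_2\).
2. Consider the shear \(S(x,y)=x+\kappa y\). It is injective on each horizontal line. The two images \(S(J_j\times\{h_j\})\) are adjacent intervals that overlap in a single point.
3. Hence \(S_*\mu\) is \(c\) times Lebesgue measure on one interval \(K\) of length \(T_1+T_2\). The composition map \(g\mapsto g\circ S\) is a unitary from \(L^2(S_*\mu)\) onto \(L^2(\mu)\).
4. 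For \(\lambda=(t,\kappa t)\) we have \(e_\lambda=e^{2\pi i t S(x,y)}\). Since \(\frac1{T_1+T_2}\mathbb Z\) is a spectrum of \(K\), the set \(\Lambda=\{(t,\kappa t):t\in\frac1{T_1+T_2}\mathbb Z\}\) is a spectrum of \(\mu\), and it lies on a straight line.

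\emph{Necessity.} Let \(\Lambda\) be a spectrum and let \(\Lambda_1\) be the multiset of first coordinates of \(\Lambda\).
1. Take \(f\in L^2(\mu)\) equal to \(g\in L^2(J_1)\) on the first segment and \(0\) on the second. Parseval gives \(\sum_\lambda c_1^2|\widehat g(\lambda_1)|^2=c_1\int|g|^2\), since the phase \(e^{2\pi i\lambda_2h_1}\) has modulus one. Thus
\[
\sum_{t\in\Lambda_1}|\widehat g(t)|^2=\frac1{c_1}\|g\|_{L^2(J_1)}^2 ,
\]
so \(\Lambda_1\) is a tight frame for \(L^2(J_1)\) with bound \(1/c_1\). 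In the same way it is a tight frame for \(L^2(J_2)\) with bound \(1/c_2\).
2. The upper frame bound, applied to \(g=\mathbf 1_{J_1}e^{2\pi i\xi x}\), gives \(\sup_\xi\#(\Lambda_1\cap[\xi,\xi+\delta])<\infty\) for small \(\delta\). So \(\Lambda_1\) is uniformly locally finite.
3. Apply the tight-frame identity to \(g_\xi=\mathbf 1_{J}e^{2\pi i\xi x}\) with \(J=J_1\), and average over \(\xi\in[-R,R]\). This gives
\[
|J|\cdot\frac1{c_1}=\frac1{2R}\sum_{t\in\Lambda_1}\int_{-R}^R|\widehat{\mathbf 1_J}(\xi-t)|^2\,d\xi .
\]
4. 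We have \(\int_{\mathbb R}|\widehat{\mathbf 1_J}|^2=|J|\), the tail of \(|\widehat{\mathbf 1_J}|^2\) is \(O(1/u^2)\), and \(\Lambda_1\) is uniformly locally finite. Together these show that the right side equals \(|J|\,\frac{\#(\Lambda_1\cap[-R,R])}{2R}+o(1)\) as \(R\to\infty\).
5. Hence \(\lim_{R\to\infty}\#(\Lambda_1\cap[-R,R])/2R\) exists and equals \(1/c_1\). Running the same argument on \(J_2\) shows it also equals \(1/c_2\). Therefore \(c_1=c_2\), and the normalization \(c_1T_1+c_2T_2=1\) forces \(c_1=c_2=\frac1{T_1+T_2}\).

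The main obstacle is step 4: justifying the interchange and the \(o(1)\) boundary error in the averaging. I would handle it by splitting \(\Lambda_1\) into points within distance \(\sqrt R\) of \([-R,R]\) and points farther away. Uniform local finiteness together with \(\sum_{|k|>M}k^{-2}\to0\) then controls the boundary terms.
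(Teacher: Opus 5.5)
Your proposal is correct, but it proves both directions by a different route than the paper.

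\textbf{Sufficiency.} The paper simply invokes \cite[Theorem~1]{KSW}. You build the spectrum by hand. Since $S(x,y)=x+\kappa y$ is a positive multiple of the orthogonal projection onto $\mathbb R(1,\kappa)$, your shear argument is the same projection mechanism the paper uses for non-coplanar segments (Corollary~\ref{lem:projection-rd} in the proof of Theorem~\ref{thm:skew-segments}). Your version has the advantage of being self-contained, and it exhibits the explicit line spectrum $\{(t,\kappa t):t\in\frac{1}{T_1+T_2}\mathbb Z\}$.

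\textbf{Necessity.} Both proofs start the same way: the multiset of first coordinates is a tight frame for $L^2(J_j)$ with bound $1/c_j$. The paper then finishes with a short nesting trick. Translating each $J_j$ to $[0,T_j]$ only multiplies the coefficients by unimodular factors. So if $T_1\le T_2$, any nonzero $g\in L^2([0,T_1])$ extended by zero to $[0,T_2]$ satisfies both tight-frame identities, which gives $\frac{1}{c_1}\|g\|^2=\frac{1}{c_2}\|g\|^2$ immediately.

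You instead show that the frame bound equals the asymptotic density $\lim_{R\to\infty}\#(\Lambda_1\cap[-R,R])/2R$. This costs the uniform local finiteness and boundary-error estimates of your steps 2--4, and they do go through as you indicate:
\begin{itemize}
\item Tonelli justifies the interchange of sum and integral.
\item The $O(u^{-2})$ tail of $|\widehat{\mathbf 1_J}|^2$ handles the far points.
\item Only $O(\sqrt R)$ points lie in the boundary layer.
\end{itemize}
Your route yields the extra, Landau-type information that $1/c_j$ is the density of the frequency set. The paper's comparison, however, reaches $c_1=c_2$ without any asymptotic analysis.
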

Finally, we consider two non-parallel line segments with
positive constant densities. These densities need not be equal
for the measure to be spectral. The following theorem shows
that, whenever the measure is spectral measure, every spectrum is
contained in a straight line.
\begin{theorem}  
\label{non-p}
Let \(I_1,I_2\subset\mathbb R^2\) be two non-parallel line segments 
and let 
\[
    \mu
    =
    c_1\mathcal H^1\!\restriction_{I_1}
    +
    c_2\mathcal H^1\!\restriction_{I_2},
    \qquad c_1,c_2>0 
\]
be a  Borel probability measure.
Then  $ \mu$ is a spectral measure if and only if  every spectrum of \(\mu\) is contained in a straight line. 
\end{theorem}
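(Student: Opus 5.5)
The approach is to reduce to the unweighted case with an invertible linear change of variables, and then invoke the result of Kolountzakis, Shi and the author \cite{KSW}: for the arc-length measure on two non-parallel segments in the plane, every spectrum lies in a straight line. The ``if'' direction needs no work, since spectrality of \(\mu\) is the standing hypothesis whenever one speaks of its spectra. The content is therefore: if \(\mu\) is spectral, then every spectrum \(\Lambda\) of \(\mu\) is contained in a line.

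\emph{Step 1 (the linear map).} Let \(u_1,u_2\) be unit direction vectors of \(I_1,I_2\) and \(L_j=\mathcal H^1(I_j)\). Because the segments are non-parallel, \(u_1,u_2\) form a basis of \(\mathbb R^2\). Hence there is a unique invertible linear map \(A\) with \(Au_j=c_ju_j\) for \(j=1,2\).

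Then \(A(I_j)\) is a segment parallel to \(I_j\) of length \(c_jL_j\). So the push-forward of \(c_j\mathcal H^1\!\restriction_{I_j}\) under \(A\) is exactly \(\mathcal H^1\!\restriction_{A(I_j)}\): the density \(c_j\) is cancelled by the length dilation \(c_j\). Consequently
\[
\nu:=A_*\mu=\mathcal H^1\!\restriction_{A(I_1)}+\mathcal H^1\!\restriction_{A(I_2)} .
\]
Its total mass is \(c_1L_1+c_2L_2=1\), so \(\nu\) is the arc-length probability measure on two non-parallel segments. The segments \(A(I_1),A(I_2)\) are still non-parallel, and they meet in at most one point, so there is no overlap of positive measure.

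\emph{Step 2 (transfer of spectra).} For \(y=Ax\) we have \(\langle\lambda,x\rangle=\langle A^{-T}\lambda,y\rangle\). Since \(f\mapsto f\circ A^{-1}\) is a unitary map \(L^2(\mu)\to L^2(\nu)\), it carries \(\{e_\lambda\}_{\lambda\in\Lambda}\) onto \(\{e_{\gamma}\}_{\gamma\in A^{-T}\Lambda}\). Therefore \(\Lambda\) is a spectrum of \(\mu\) if and only if \(A^{-T}\Lambda\) is a spectrum of \(\nu\). In particular, \(\mu\) is spectral if and only if \(\nu\) is.

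\emph{Step 3 (conclusion).} Apply \cite{KSW} to \(\nu\): every spectrum \(A^{-T}\Lambda\) is contained in a straight line \(\ell\). Then \(\Lambda\subset A^{T}\ell\), and \(A^{T}\ell\) is again a straight line because \(A^T\) is linear and invertible.

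\emph{Expected obstacle.} The main point to check is that the hypotheses of \cite{KSW} match our \(\nu\) exactly. That result is for arc-length measure, possibly normalized as \(\mathcal H^1/(\text{total length})\), on two arbitrary non-parallel planar segments, including segments that intersect or share an endpoint. Our \(\nu\) already has total mass \(1\), so the normalization matches. If the cited statement is restricted to some normalized position, I would first move \(A(I_1),A(I_2)\) into that position by a translation, which only multiplies exponentials by unimodular constants and leaves spectra unchanged. If a rotation or a further linear map is also needed, I would compose it with \(A\) and use the same transfer argument as in Step 2.
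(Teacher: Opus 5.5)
Your proposal is correct and is essentially the paper's argument: the paper also absorbs the weights with the linear map $Av_j=c_jv_j$ (Proposition~\ref{prop:removing-weights}), transfers spectra through $A^{-T}$, and then applies \cite{KSW}. The only difference is that the paper first moves $A(I_1),A(I_2)$ onto the coordinate axes, by an affine map that preserves length along both segment directions, so that it can quote \cite[Theorem~2]{KSW} for the coordinate-axis model. If you need that normalization, the extra map must send each $u_j$ to a unit vector (equivalently, use the single linear map $u_j\mapsto c_je_j$), since a general linear map would reintroduce unequal weights.
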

 
\section{Proof of the main theorems}

\subsection{Uniformity of densities on two line segments}
We begin by proving the restrictions that spectrality imposes
on the densities. When the two segments intersect in a set
of zero arc-length, we show that each density is constant
almost everywhere on its segment. When their intersection
has positive arc length, we show that the total density
is constant almost everywhere on their union.

\begin{proof}[Proof of Theorem~\ref{thm:density}]
Let \(\Lambda\subset\mathbb R^d\) be a spectrum of \(\mu\).
We first prove \textup{(i)}. For \(j=1,2\), write
\[
    I_j=\{p_j+tv_j:0\leq t\leq T_j\},
\]
where \(p_j\in\mathbb R^d\), \(v_j\in\mathbb R^d\) is a
unit direction vector, and \(T_j=\mathcal H^1(I_j)\).
Define  $ w_j(t)=f_j(p_j+tv_j)$ for $t\in[0,T_j]$
and  \(w_j\) by zero outside \([0,T_j]\).
Fix \(j\in\{1,2\}\). Since
\(\mathcal H^1(I_1\cap I_2)=0\), every function
  $ g\in L^2([0,T_j],w_j(t)\,dt)$ 
can be identified with a function \(G\in L^2(\mu)\)
supported on \(I_j\), by setting
\[
    G(p_j+tv_j)=g(t)
\]
on \(I_j\) and \(G=0\) on the other segment.  
For \(\lambda\in\Lambda\), write
 $ e_\lambda(x)=e^{2\pi i\langle\lambda,x\rangle}.$
 Then
\[
\begin{aligned}
    \sum_{\lambda\in\Lambda}
    \left|\langle G,e_\lambda\rangle_{L^2(\mu)}\right|^2
    &=
    \sum_{\lambda\in\Lambda}
    \left|
        \int_0^{T_j}
        g(t)e^{-2\pi i\langle\lambda,p_j+tv_j\rangle}
        w_j(t)\,dt
    \right|^2\\
    &=
    \sum_{\lambda\in\Lambda}
    \left|
        \int_0^{T_j}
        g(t)e^{-2\pi i t\langle\lambda,v_j\rangle}
        w_j(t)\,dt
    \right|^2.
\end{aligned}
\]
Moreover,
\[
    \|G\|_{L^2(\mu)}^2
    =
    \int_0^{T_j}|g(t)|^2w_j(t)\,dt.
\]
Since \(E(\Lambda)\) is an orthonormal basis for
\(L^2(\mu)\), Parseval's identity gives
\begin{equation}\label{eq:density-parseval-j}
    \sum_{\lambda\in\Lambda}
    \left|
        \int_0^{T_j}
        g(t)e^{-2\pi i t\langle\lambda,v_j\rangle}
        w_j(t)\,dt
    \right|^2
    =
    \int_0^{T_j}|g(t)|^2w_j(t)\,dt.
\end{equation}
This implies that the projected exponential family
$\{ e^{2\pi i t\langle\lambda,v_j\rangle}\}_{\lambda\in\Lambda}$
  is a Parseval frame for
\(L^2([0,T_j],w_j(t)\,dt)\). 
By \cite[Corollary~2.6]{DL2014}, there exists
a constant \(c_j>0\) such that
\[
    w_j(t)=c_j
    \quad\text{for a.e. }t\in[0,T_j].
\]
Equivalently,
\[
    f_j=c_j
    \quad \mathcal H^1\text{-a.e. on }I_j.
\]
Since this holds for \(j=1,2\), we conclude that
\[
    \mu
    =
    c_1\mathcal H^1\!\restriction_{I_1}
    +
    c_2\mathcal H^1\!\restriction_{I_2}.
\]
This proves \textup{(i)}.

We now prove \textup{(ii)}. Suppose that
\(\mathcal H^1(I_1\cap I_2)>0\). Then \(I_1\) and
\(I_2\) are collinear. There
exist \(p\in\mathbb R^d\), a unit vector
\(v\in\mathbb R^d\), and \(T>0\) such that
\[
    I_1\cup I_2=\{p+tv:0\leq t\leq T\},
\]
where \(T=\mathcal H^1(I_1\cup I_2)\).
For \(j=1,2\), let
\[
    J_j=\{t\in[0,T]:p+tv\in I_j\}.
\]
After extending \(f_j\) by zero outside \(I_j\), define
\[
    w(t)
    =
    f_1(p+tv)\mathbf 1_{J_1}(t)
    +
    f_2(p+tv)\mathbf 1_{J_2}(t),
    \qquad 0\leq t\leq T.
\]

For every \(g\in L^2([0,T],w(t)\,dt)\),  since \(E(\Lambda)\) is an orthonormal basis for \(L^2(\mu)\),
Parseval's identity gives 
\[
    \sum_{\lambda\in\Lambda}
    \left|
        \int_0^T
        g(t)e^{-2\pi i t\langle\lambda,v\rangle}
        w(t)\,dt
    \right|^2
    =
    \int_0^T|g(t)|^2w(t)\,dt.
\]
Hence
$ \{ e^{2\pi i t\langle\lambda,v\rangle}\}_{\lambda\in\Lambda}$,
  is a Parseval frame
for \(L^2([0,T],w(t)\,dt)\). We can again apply
\cite[Corollary~2.6]{DL2014} to conclude that
there exists a constant \(C>0\) such that
\[
    w(t)=C
    \quad\text{for a.e. }t\in[0,T].
\]

Since \(\mu\) is a probability measure,
\[
    1
    =
    \mu(\mathbb R^d)
    =
    \int_0^T w(t)\,dt
    =
    CT.
\]
Thus
\[
    C=\frac1T
    =
    \frac{1}{\mathcal H^1(I_1\cup I_2)}.
\]
Consequently,
\[
    f_1\mathbf 1_{I_1}+f_2\mathbf 1_{I_2}
    =
    \frac{1}{\mathcal H^1(I_1\cup I_2)}
    \quad
    \mathcal H^1\text{-a.e. on }I_1\cup I_2,
\]
and hence
\[
    \mu
    =
    \frac{1}{\mathcal H^1(I_1\cup I_2)}
    \mathcal H^1\!\restriction_{I_1\cup I_2}.
\]
This proves \textup{(ii)}.
\end{proof}
\begin{remark}\label{rem:overlapping-densities}
In part \emph{(ii)} of Theorem \ref{thm:density},
spectrality does not imply that \(f_1\) and \(f_2\) are individually
constant.  
\end{remark}
The following example shows that the densities \(f_1\) and \(f_2\)
need not be individually constant, as stated in Remark \ref{rem:overlapping-densities}.
\begin{example} 
Let $I_1=[0,2]\times\{0\}$ and $I_2=[1,3]\times\{0\}.$ 
 Define \(f_1\) on \([0,2]\) by
\[
    f_1(x)
    =
    \begin{cases}
        \dfrac13,
            &0\leq x<1,\\[2mm]
      \dfrac{x}{12},
            &1\leq x\leq2,
    \end{cases}
\]
and define \(f_2\) on \([1,3]\) by
\[
    f_2(x)
    =
    \begin{cases}
        \dfrac13- \dfrac{x}{12},
            &1\leq x\leq2,\\[2mm]
        \dfrac13,
            &2<x\leq3.
    \end{cases}
\]
Both \(f_1\) and \(f_2\) are positive   on their
respective segments, and neither of them is constant.
Then $I_1\cap I_2=[1,2]\times\{0\}$
and hence $ \mathcal H^1(I_1\cap I_2)=1>0.$
 Consider the measure
\[
    \mu
    =
    f_1\,\mathcal H^1\!\restriction_{I_1}
    +
    f_2\,\mathcal H^1\!\restriction_{I_2}.
\]
On the overlapping part \(I_1\cap I_2\), we have
\[ f_1(x)+f_2(x)
    =\dfrac{x}{12}+\left(\frac13- \dfrac{x}{12}\right)=\frac13.
\]
On the non-overlapping parts, the density is also \(1/3\).
Consequently,
 $\mu=
    \frac13
    \mathcal H^1\!\restriction_{[0,3]\times\{0\}}.$
Thus, \(\mu\) is the normalized arc-length measure on a line segment
of length \(3\), and hence it is a spectral measure. For example,
\[
    \Lambda
    =
    \left\{
        \left(\frac n3,0\right):
        n\in\mathbb Z
    \right\}
\]
is a spectrum for \(\mu\).
\
Therefore, spectrality of
    $f_1\,\mathcal H^1\!\restriction_{I_1}
    +
    f_2\,\mathcal H^1\!\restriction_{I_2}$
does not imply that \(f_1\) and \(f_2\) are individually constant
when the two segments overlap in a set of positive arc length.
In this situation, spectrality can only determine the combined
density $f_1\mathbf 1_{I_1}
    +
    f_2\mathbf 1_{I_2}$,
rather than the two densities separately.
\end{example}
\subsection{Weighted measures on two non-coplanar line segments}
Kolountzakis and the author
\cite[Theorem~1.2]{kolountzakis2025spectrality} studied probability
measures supported on finite unions of line segments in
\(\mathbb R^2\). They showed that if the orthogonal
projection onto a  straight line \(L\) is one-to-one almost everywhere,
then the measure has a spectrum contained in \(L\) if and
only if the projected measure is a spectral measure. The same argument extends to projections onto linear subspaces of arbitrary dimension, as stated in the following lemma.
\begin{lemma}
Let \(\mu\) be a Borel probability measure on \(\mathbb R^d\), and
let \(V\subset\mathbb R^d\) be a \(k\)-dimensional linear subspace
with an orthonormal basis \(\gamma_1,\ldots,\gamma_k\). Define
\[
    \pi_V(x)
    =
    \bigl(\langle x,\gamma_1\rangle,\ldots,
          \langle x,\gamma_k\rangle\bigr)
    \in\mathbb R^k.
\]
Suppose that \(\pi_V\) is one-to-one \(\mu\)-almost everywhere.
Then, for any countable set \(\Lambda\subset\mathbb R^k\),
\[
    U\Lambda
    =
    \left\{
        \sum_{j=1}^k \lambda_j\gamma_j:
        (\lambda_1,\ldots,\lambda_k)\in\Lambda
    \right\}
    \subset V
\]
is a spectrum of \(\mu\) if and only if \(\Lambda\) is a spectrum
of \(\pi_V\mu\) on \(\mathbb R^k\), where 
\[
     \pi_V \mu(E)
    =
    \mu\bigl(\pi_V^{-1}(E)\bigr)
    =
    \mu\bigl(\{x\in\mathbb R^d:\pi_V(x)\in E\}\bigr)
\]
for every Borel set \(E\subset\mathbb R^k\).
\end{lemma}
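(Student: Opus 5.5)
The plan is to reduce everything to the identity
\[
\langle \textstyle\sum_{j}\lambda_j\gamma_j, x\rangle=\langle \lambda,\pi_V(x)\rangle,
\qquad \lambda=(\lambda_1,\ldots,\lambda_k)\in\mathbb R^k,\ x\in\mathbb R^d .
\]
This holds because \(\gamma_1,\ldots,\gamma_k\) are orthonormal, so the inner product is \(\sum_j\lambda_j\langle\gamma_j,x\rangle\). Consequently, for \(\lambda\in\Lambda\) and \(U\lambda=\sum_j\lambda_j\gamma_j\), the exponential \(e_{U\lambda}\) on \(\mathbb R^d\) equals \(e_\lambda\circ\pi_V\), where \(e_\lambda\) is the exponential on \(\mathbb R^k\). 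The statement should therefore follow once we show that composition with \(\pi_V\) is a unitary map from \(L^2(\pi_V\mu)\) onto \(L^2(\mu)\).

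\textbf{Step 1 (isometry).} Define \(T:L^2(\pi_V\mu)\to L^2(\mu)\) by \(Tg=g\circ\pi_V\). By the change of variables formula for push-forward measures,
\[
\int_{\mathbb R^d}|g(\pi_V x)|^2\,d\mu(x)=\int_{\mathbb R^k}|g(y)|^2\,d\pi_V\mu(y).
\]
Hence \(T\) is well defined on equivalence classes and is a linear isometry. The same identity applied to products gives
\[
\langle Tg,Th\rangle_{L^2(\mu)}=\langle g,h\rangle_{L^2(\pi_V\mu)}.
\]
In particular, \(\widehat{\mu}(U\xi)=\widehat{\pi_V\mu}(\xi)\) for every \(\xi\in\mathbb R^k\), so orthonormality of \(E(U\Lambda)\) in \(L^2(\mu)\) is equivalent to orthonormality of \(E(\Lambda)\) in \(L^2(\pi_V\mu)\). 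The map \(\lambda\mapsto U\lambda\) is injective, so the two families are indexed compatibly.

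\textbf{Step 2 (surjectivity).} This is the main obstacle, and it is where the injectivity hypothesis is used. Choose a Borel set \(A\subset\mathbb R^d\) with \(\mu(\mathbb R^d\setminus A)=0\) on which \(\pi_V\) is injective. If necessary, shrink \(A\) to a \(\sigma\)-compact subset of full measure; this is possible since \(\mu\) is a finite Radon measure. Then \(\pi_V|_A\) is a continuous injection on a Borel set. By the Lusin--Souslin theorem, \(\pi_V(A)\) is Borel and the inverse map \(\psi:\pi_V(A)\to A\) is Borel measurable. Alternatively, on each compact piece \(\pi_V|_A\) is a homeomorphism onto its image.

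Given \(F\in L^2(\mu)\), set \(g=F\circ\psi\) on \(\pi_V(A)\) and \(g=0\) elsewhere. Then \(g\) is Borel and \(g\circ\pi_V=F\) on \(A\), hence \(\mu\)-a.e. Step 1 gives \(g\in L^2(\pi_V\mu)\). So \(T\) is onto, and therefore unitary.

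\textbf{Step 3 (conclusion).} A unitary operator maps orthonormal bases to orthonormal bases, and so does its inverse. Since \(Te_\lambda=e_{U\lambda}\), the family \(E(\Lambda)\) is an orthonormal basis of \(L^2(\pi_V\mu)\) if and only if \(E(U\Lambda)\) is an orthonormal basis of \(L^2(\mu)\). This is exactly the claimed equivalence.

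The only delicate point is the measurability of the inverse in Step 2. The rest is a formal transfer, exactly as in \cite[Theorem~1.2]{kolountzakis2025spectrality}, with the line \(L\) replaced by the subspace \(V\).
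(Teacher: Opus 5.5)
Your proposal is correct and follows the paper's argument. The identity $\langle U\lambda,x\rangle=\langle\lambda,\pi_V(x)\rangle$ together with the push-forward change of variables gives orthogonality, and writing every $F\in L^2(\mu)$ as $g\circ\pi_V$ transfers completeness; you simply package this as unitarity of $g\mapsto g\circ\pi_V$, and your Lusin--Souslin step supplies the Borel measurability of the inverse that the paper takes for granted when it asserts $f=g\circ\pi_V$.
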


\begin{proof}
We first consider orthogonality. For any
\(\lambda,\lambda'\in\Lambda\),
\[
\begin{aligned}
\langle U\lambda-U\lambda',x\rangle
&=\left\langle
   \sum_{j=1}^k(\lambda_j-\lambda'_j)\gamma_j,x
  \right\rangle\\
&=\sum_{j=1}^k(\lambda_j-\lambda'_j)
       \langle\gamma_j,x\rangle\\
&=\langle\lambda-\lambda',\pi_V(x)\rangle.
\end{aligned}
\]  
Hence, by the definition of the projected measure,
\[
\begin{aligned}
    \int_{\mathbb R^d}
        e^{2\pi i\langle U\lambda-U\lambda',~x\rangle}
        \,d\mu(x)
    &=
    \int_{\mathbb R^d}
        e^{2\pi i\langle\lambda-\lambda',~\pi_V(x)\rangle}
        \,d\mu(x)\\
    &=
    \int_{\mathbb R^k}
        e^{2\pi i\langle\lambda-\lambda',~s\rangle}
        \,d(\pi_V\mu)(s).
\end{aligned}
\]
Thus \(E(U\Lambda)\) is orthogonal in \(L^2(\mu)\) if and only if
\(E(\Lambda)\) is orthogonal in \(L^2(\pi_V\mu)\).

We next prove the equivalence of completeness. Since \(\pi_V\)
is one-to-one \(\mu\)-almost everywhere, every
\(f\in L^2(\mu)\) can be written as \(f=g\circ\pi_V\)
\(\mu\)-almost everywhere for some
\(g\in L^2(\pi_V\mu)\). Conversely, every \(g\in L^2(\pi_V\mu)\) determines a function \(f\in L^2(\mu)\) through \(f=g\circ\pi_V\).  By the definition of
the projected measure, we have
\[
    \|f\|_{L^2(\mu)}
    =
    \|g\|_{L^2(\pi_V\mu)}.
\]
Moreover, for every \(\lambda\in\Lambda\),
\[
\begin{aligned}
    \int_{\mathbb R^d}
        f(x)e^{-2\pi i\langle U\lambda,x\rangle}
        \,d\mu(x)
    &=
    \int_{\mathbb R^d}
        g(\pi_V(x))
        e^{-2\pi i\langle\lambda,\pi_V(x)\rangle}
        \,d\mu(x)\\
    &=
    \int_{\mathbb R^k}
        g(s)e^{-2\pi i\langle\lambda,s\rangle}
        \,d(\pi_V\mu)(s).
\end{aligned}
\]
Thus \(f\) is orthogonal to \(E(U\Lambda)\)  if
and only if \(g\) is orthogonal to \(E(\Lambda)\).
Since their norms are equal, \(f=0\) if and only if
\(g=0\) in the respective \(L^2\)-spaces. Hence
\(E(U\Lambda)\)  is complete in \(L^2(\mu)\)
if and only if \(E(\Lambda)\) is complete in
\(L^2(\pi_V\mu)\).
 This proves the lemma.
\end{proof}
Applying the preceding lemma to measures supported on finite unions
of line segments gives the following criterion for line spectra.
\begin{corollary}\label{lem:projection-rd}
Let \(\mu\) be a Borel probability measure on
\(\mathbb R^d\) (\(d\geq2\)), whose support is a finite
union of line segments, and let \(\gamma\in\mathbb R^d\)
be a unit vector. Define
\[
    \pi_\gamma(x)=\langle x,\gamma\rangle,
\]
which identifies the orthogonal projection onto
\(L=\mathbb R\gamma\) with a map into \(\mathbb R\).
Suppose that \(\pi_\gamma\) is one-to-one
\(\mu\)-almost everywhere. Then \(\mu\) has a spectrum
\[
    \Lambda\gamma
    =
    \{\lambda\gamma:\lambda\in\Lambda\}\subset L
\]
if and only if the projected measure \(\pi_\gamma\mu\)
has spectrum \(\Lambda\subset\mathbb R\), where
\[
 \pi_\gamma\mu(E)
    =
    \mu (\{x\in\mathbb R^d:
        \langle x,\gamma\rangle\in E\} )
\]
for every Borel set \(E\subset\mathbb R\).    
\end{corollary}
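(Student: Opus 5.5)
This corollary is the special case \(k=1\) of the preceding lemma, and we will derive it directly from that lemma. Take \(V=L=\mathbb R\gamma\), a one-dimensional linear subspace with orthonormal basis consisting of the single vector \(\gamma_1=\gamma\). The coordinate map of the lemma is then
\[
    \pi_V(x)=\langle x,\gamma\rangle=\pi_\gamma(x)\in\mathbb R .
\]
For a countable \(\Lambda\subset\mathbb R\), the set \(U\Lambda\) equals \(\{\lambda\gamma:\lambda\in\Lambda\}=\Lambda\gamma\). The push-forward \(\pi_V\mu\) defined in the lemma coincides with \(\pi_\gamma\mu\) as defined in the corollary, since both are \(E\mapsto\mu(\{x:\langle x,\gamma\rangle\in E\})\).

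Next we check the hypotheses. The lemma requires only that \(\mu\) be a Borel probability measure and that \(\pi_V\) be one-to-one \(\mu\)-almost everywhere. Both are assumed in the corollary, so the assumption that the support is a finite union of line segments is not actually needed for the equivalence. It only records the setting in which the corollary will be applied, for instance to two non-coplanar segments, where a generic direction \(\gamma\) makes \(\pi_\gamma\) injective off a finite set. The lemma then says that \(\Lambda\gamma\) is a spectrum of \(\mu\) if and only if \(\Lambda\) is a spectrum of \(\pi_\gamma\mu\).

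The only point requiring care is the phrase "\(\mu\) has a spectrum \(\Lambda\gamma\subset L\)". Every countable subset of \(L\) has the form \(\Lambda\gamma\) for a unique countable \(\Lambda\subset\mathbb R\), namely \(\Lambda=\{\langle\eta,\gamma\rangle:\eta\in\text{the subset}\}\). So the equivalence for each fixed \(\Lambda\) gives exactly the stated correspondence between spectra of \(\mu\) contained in \(L\) and spectra of \(\pi_\gamma\mu\). We do not expect any genuine obstacle, since the content lies entirely in the lemma.
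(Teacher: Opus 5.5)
Your proposal is correct and matches the paper, which obtains this corollary with no separate argument as the case \(k=1\), \(V=\mathbb R\gamma\), \(\gamma_1=\gamma\) of the preceding lemma. Your remarks that the finite-union-of-segments hypothesis plays no role in the equivalence, and that countable subsets of \(L\) correspond bijectively to countable \(\Lambda\subset\mathbb R\), are accurate and make explicit what the paper leaves implicit.
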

\begin{proof}[Proof of Theorem~\ref{thm:skew-segments}]
For  $j=1,2$, we write
\[
    I_j=\{p_j+tv_j:0\leq t\leq T_j\}
\]
where \(p_j\in\mathbb R^d\), \(v_j\in\mathbb R^d\) is a
unit direction vector, and \(T_j=\mathcal H^1(I_j)\). Since  \(I_1,I_2 \) are non-coplanar line segments, the vectors
   $ v_1, v_2$ and $w:=p_2-p_1$
are linearly independent. Hence there exists \(u\in\mathbb R^d\)
such that
\[
    \langle u,v_1\rangle=c_1,\qquad
    \langle u,v_2\rangle=c_2,\qquad
    \langle u,p_2-p_1\rangle=c_1T_1.
\]
Let $\bar{u}=\frac{u}{|u|}$ and  consider the orthogonal projection $ \pi_{\bar{u}}(x)=\langle x,\bar{u}\rangle$ onto \(\mathbb Ru\). Since \(\mu\) is a probability measure, we have
  $  c_1T_1+c_2T_2=1$.
By simple calculations, we obtain  
\[
    \pi_{\bar{u}}(p_1+tv_1)=b+\frac{c_1t}{|u|} 
    \qquad \text{and}\qquad
    \pi_{\bar{u}}(p_2+tv_2)=b+\frac{c_1T_1+c_2t}{|u|} 
\]
for \(0\leq t\leq T_j\), where  $b=\langle p_1, \bar{u} \rangle$. Thus
\[
    \pi_{\bar{u}}(I_1)=\left[b,b+\frac{c_1T_1}{|u|}\right]
  \qquad \text{and}\qquad
    \pi_{\bar{u}}(I_2)=
    \left[b+\frac{c_1T_1}{|u|},b+\frac{1}{|u|}\right].
\]
 Therefore,
\(\pi_{\bar{u}}\) is one-to-one \(\mu\)-almost everywhere  and the two
image intervals meet only at one endpoint.
To compute the projected measure $ \pi_{\bar{u}}  \mu$, let \(\varphi\) be a
nonnegative Borel function on \(\mathbb R\). Then 
\[
\begin{aligned}
    \int_{\mathbb R}\varphi(s)\,
        d\pi_{\bar{u}}\mu(s)
    &=
    \int_{\mathbb R^d}\varphi(\pi_{\bar{u}}(x))\,d\mu(x)\\
    &=
    c_1\int_0^{T_1}
        \varphi\left(b+\frac{c_1t}{|u|}\right)\,dt 
     +
    c_2\int_0^{T_2}
        \varphi\left(b+\frac{c_1T_1+c_2t}{|u|}\right)\,dt \\
    &=
    |u|\int_b^{b+c_1T_1/|u|}\varphi(s)\,ds
    +
    |u|\int_{b+c_1T_1/|u|}^{b+1/|u|}\varphi(s)\,ds\\
    &=
    |u|\int_b^{b+1/|u|}\varphi(s)\,ds.
\end{aligned}
\]
Thus, $$ \pi_{\bar{u}} \mu= |u|\,\mathcal L\!\restriction_{[b,b+\frac{1}{|u|}]} $$
and  the measure $ \pi_{\bar{u}} \mu$ has spectrum \(|u|\mathbb Z\). Hence, the measure \(\mu\) has a spectrum $ \Lambda  =\{nu:n\in\mathbb Z\}$ contained in
 the straight line \(\mathbb Ru\) by Corollary  \ref{lem:projection-rd}.
\end{proof}
The above theorem guarantees the existence of a line
spectrum. However, it does not imply that every spectrum
is contained in a straight line, as the following example
shows.

\begin{example}\label{ex:skew-nonline-spectrum}
Let
\[
    I_1=\{(t,0,0):0\leq t\leq1\},
    \qquad
    I_2=\{(0,t,1):0\leq t\leq1\},
\]
and define
\[
    \mu
    =
    \frac12\mathcal H^1\!\restriction_{I_1}
    +
    \frac12\mathcal H^1\!\restriction_{I_2}.
\]
 Their unit direction vectors and
the displacement between their initial points are
\[
    v_1=(1,0,0),\qquad
    v_2=(0,1,0),\qquad
    p_2-p_1=(0,0,1).
\]
These vectors are linearly independent, and hence the
supporting lines are non-coplanar.

Take $ u= (\frac12,\frac12,\frac12 )$. With \(c_1=c_2=1/2\) and \(T_1=T_2=1\), we have
\[
    \langle u,v_1\rangle=\frac12,\qquad
    \langle u,v_2\rangle=\frac12,\qquad
    \langle u,p_2-p_1\rangle=\frac12=c_1T_1.
\]
Therefore, Theorem~\ref{thm:skew-segments} shows that
\[
    \Lambda_0=
    \left\{
        \lambda_n^0=
        \left(\frac n2,\frac n2,\frac n2\right):
        n\in\mathbb Z
    \right\}
\]
is a spectrum of \(\mu\).
We now define
\[
    \lambda_n
    =
    \lambda_n^0+(0,0,n^2)
    =
    \left(\frac n2,\frac n2,\frac n2+n^2\right) \]
with $n\in\mathbb Z$ and let
 $ \Lambda=\{\lambda_n:n\in\mathbb Z\}$.
 For any  \((x,y,z)\in I_1\cup I_2\),  we have 
\[
\begin{aligned}
    e^{2\pi i\langle\lambda_n,(x,y,z)\rangle}
    =
    e^{2\pi i\langle\lambda_n^0,(x,y,z)\rangle}
    e^{2\pi i n^2z} 
    =
    e^{2\pi i\langle\lambda_n^0,(x,y,z)\rangle}
\end{aligned}.
\]
Thus, \(E(\Lambda)\) has the same orthogonality and completeness
properties as \(E(\Lambda_0)\).  
This means 
\(E(\Lambda)\) is also an orthonormal basis for
\(L^2(\mu)\),  but it is not contained in any straight line.

\end{example}
\subsection{  Weighted measures on two  parallel line segments}
We now turn to measures supported on two line segments lying
on distinct parallel lines. By Theorem \ref{thm:density}, spectrality forces
the density on each segment to be constant almost everywhere.
We show below that these two constants must also be equal,
which completes the characterization of spectrality in this case.
\begin{proof}[Proof of Theorem \ref{prop:weighted-parallel-segments}]  
Sufficiency can be directly obtained from \cite[Theorem~1]{KSW}. Next, we only need to consider necessity. 
Let
\(\Lambda\subset\mathbb R^2\) be a spectrum of \(\mu\) and write
 $\lambda=(\xi_\lambda,\eta_\lambda)$ for any  $\lambda\in\Lambda$. 
Thus 
\[
    E(\Lambda)
    =
    \left\{
        e_\lambda(x,y)
        =
        e^{2\pi i(\xi_\lambda x+\eta_\lambda y)}
        :
        \lambda\in\Lambda
    \right\}
\]
is an orthonormal basis for \(L^2(\mu)\).
Fix \(j\in\{1,2\}\), and let
 $ f\in L^2([a_j,a_j+T_j])$.
Define \(F_j\in L^2(\mu)\) by letting \(F_j\) agree with \(f\) on
the \(j\)-th segment and vanish on the other segment. 
We have
\[
    \|F_j\|_{L^2(\mu)}^2
    =
    c_j
    \int_{a_j}^{a_j+T_j}|f(x)|^2\,dx 
\]
and  for every \(\lambda\in\Lambda\), we have
\[
 \sum_{\lambda\in\Lambda}
    \left|
        \langle F_j,e_\lambda\rangle_{L^2(\mu)}
    \right|^2=
c_j^2\sum_{\lambda\in\Lambda}
    \left|
\int_{a_j}^{a_j+T_j}
f(x)e^{-2\pi i(\xi_\lambda x+\eta_\lambda h_j)}
\,dx   \right|^2= 
c_j^2\sum_{\lambda\in\Lambda}
    \left|  
\int_{a_j}^{a_j+T_j}
f(x)e^{-2\pi i\xi_\lambda x}\,dx \right|^2.
\]
Since \(E(\Lambda)\) is an orthonormal basis, Parseval's identity
gives
\begin{equation}\label{eq:parallel-frame-original}
    \sum_{\lambda\in\Lambda}
    \left|
        \int_{a_j}^{a_j+T_j}
        f(x)e^{-2\pi i\xi_\lambda x}\,dx
    \right|^2
    =
    \frac{1}{c_j}
    \int_{a_j}^{a_j+T_j}|f(x)|^2\,dx.
\end{equation}

Notice that the sum is indexed by \(\lambda\in\Lambda\), rather than
by the set of distinct first coordinates. Thus, if two different
elements of \(\Lambda\) have the same first coordinate, the
corresponding exponential occurs with the appropriate multiplicity.
Equation~\eqref{eq:parallel-frame-original} says precisely that the
sequence $ \{
        e^{2\pi i\xi_\lambda x}
     \}_{\lambda\in\Lambda}$
is a tight frame for \(L^2([a_j,a_j+T_j])\) with frame bound
\(1/c_j\).

We now translate both intervals to intervals beginning at the
origin. For any \(\widetilde{f}\in L^2([0,T_j])\), we let  $f(x)=\widetilde{f}(x-a_j)$ for 
   $x\in[a_j,a_j+T_j].$  
Then
\[
\int_{a_j}^{a_j+T_j}
f(x)e^{-2\pi i\xi_\lambda x}\,dx=
\int_0^{T_j}
\widetilde{f}(t)e^{-2\pi i\xi_\lambda(t+a_j)}\,dt=
e^{-2\pi i\xi_\lambda a_j}
\int_0^{T_j}
\widetilde{f}(t)e^{-2\pi i\xi_\lambda t}\,dt.\]
Therefore, \eqref{eq:parallel-frame-original} is equivalent to
\begin{equation}\label{eq:parallel-frame-zero}
    \sum_{\lambda\in\Lambda}
    \left|
        \int_0^{T_j}
        \widetilde{f}(t)e^{-2\pi i\xi_\lambda t}\,dt
    \right|^2
    =
    \frac{1}{c_j}
    \int_0^{T_j}|\widetilde{f}(t)|^2\,dt
\end{equation}
for every \(\widetilde{f}\in L^2([0,T_j])\).

Assume, without loss of generality, that
$T_1\leq T_2$.
Let \(g\in L^2([0,T_1])\) and 
\[
    g'(t)
    =
    \begin{cases}
        g(t),&0\leq t\leq T_1,\\
        0,&T_1<t\leq T_2.
    \end{cases}
\]
Applying \eqref{eq:parallel-frame-zero} with \(j=2\) to
\( g'\), we obtain
\[
\begin{aligned}
\sum_{\lambda\in\Lambda}
\left|
    \int_0^{T_2}
    g'(t)e^{-2\pi i\xi_\lambda t}\,dt
\right|^2
=
\frac{1}{c_2}
\int_0^{T_2}| g'(t)|^2\,dt.
\end{aligned}
\]
By the definition of \( g'\), this becomes
\begin{equation}\label{eq:bound-c2-on-short}
    \sum_{\lambda\in\Lambda}
    \left|
        \int_0^{T_1}
        g(t)e^{-2\pi i\xi_\lambda t}\,dt
    \right|^2
    =
    \frac{1}{c_2}
    \int_0^{T_1}|g(t)|^2\,dt.
\end{equation}
On the other hand, applying
\eqref{eq:parallel-frame-zero} directly with \(j=1\) gives
\begin{equation}\label{eq:bound-c1-on-short}
    \sum_{\lambda\in\Lambda}
    \left|
        \int_0^{T_1}
        g(t)e^{-2\pi i\xi_\lambda t}\,dt
    \right|^2
    =
    \frac{1}{c_1}
    \int_0^{T_1}|g(t)|^2\,dt.
\end{equation}
Comparing \eqref{eq:bound-c2-on-short} and
\eqref{eq:bound-c1-on-short}, we find that
\[
    \left(\frac{1}{c_1}-\frac{1}{c_2}\right)
    \int_0^{T_1}|g(t)|^2\,dt
    =
    0
\]
for every \(g\in L^2([0,T_1])\). Choosing any nonzero \(g\), we
conclude that $ c_1=c_2$.
Finally, the probability normalization gives $ c_1T_1+c_2T_2=1$.
Therefore, $c_1=c_2=\frac{1}{T_1+T_2}$.
This completes the proof.
\end{proof}
The same argument shows that a spectral probability measure supported on finitely many parallel line segments lying on distinct lines must have the same constant density on each segment. This yields the following corollary.
\begin{corollary}
 \label{coro:weighted-parallel-segments}
Given positive $k\in\mathbb{Z}$. Let \(h_1,h_2,\cdots,h_k\in\mathbb R\) satisfy  pairwise distinct,
\(a_1,a_2,\cdots,a_k\in\mathbb R\) and \(T_1,T_2,\cdots,T_k,c_1,c_2,\cdots,c_k>0\). And let
\[
    \mu
    =\sum_{j=1}^{k}
    c_j
    \left(
        \mathcal L\!\restriction_{[a_j,a_j+T_j]}
        \times\delta_{h_j}
    \right)
\]
is a Borel probability measure. If $\mu$ is a spectral measure, then $$c_1=c_2=\cdots=c_k=\frac{1}{T_1+T_2+\cdots+T_k}.$$  
\end{corollary}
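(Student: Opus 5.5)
The plan is to repeat the argument from the proof of Theorem~\ref{prop:weighted-parallel-segments}, now for $k$ segments instead of two. Fix a spectrum $\Lambda\subset\mathbb R^2$ of $\mu$ and write $\lambda=(\xi_\lambda,\eta_\lambda)$. For each $j\in\{1,\dots,k\}$ and each $f\in L^2([a_j,a_j+T_j])$, define $F_j\in L^2(\mu)$ to equal $f$ on the $j$-th segment and $0$ on the others. This is well defined because the heights $h_j$ are pairwise distinct, so the segments are disjoint. Applying Parseval's identity to $F_j$, the factor $e^{-2\pi i\eta_\lambda h_j}$ has modulus one and drops out. This yields, exactly as in \eqref{eq:parallel-frame-original}, that $\{e^{2\pi i\xi_\lambda x}\}_{\lambda\in\Lambda}$ (counted with multiplicity) is a tight frame for $L^2([a_j,a_j+T_j])$ with bound $1/c_j$. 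After translating to the origin as in \eqref{eq:parallel-frame-zero}, the identity
\[
\sum_{\lambda\in\Lambda}\left|\int_0^{T_j}\widetilde f(t)e^{-2\pi i\xi_\lambda t}\,dt\right|^2=\frac1{c_j}\int_0^{T_j}|\widetilde f(t)|^2\,dt
\]
holds for all $\widetilde f\in L^2([0,T_j])$ and every $j$.

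Next I would compare any pair of indices $i\neq j$. Without loss of generality $T_i\le T_j$. Take a nonzero $g\in L^2([0,T_i])$ and extend it by zero to $[0,T_j]$. Plugging $g$ into the identity for $i$ and its zero extension into the identity for $j$ gives two expressions with the same left-hand side. Hence $(1/c_i-1/c_j)\|g\|_2^2=0$, and so $c_i=c_j$. Since the pair was arbitrary, all $c_j$ equal a common value $c$.

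Finally, the normalization $\mu(\mathbb R^2)=1$ reads $c\sum_j T_j=1$, which gives $c=1/(T_1+\cdots+T_k)$. There is no real obstacle here. The only point needing care is the same as in the two-segment case: the one-dimensional sums run over $\lambda\in\Lambda$ with multiplicity rather than over distinct first coordinates, so the tight-frame identities hold verbatim and comparing the two frame bounds on the shorter interval is legitimate. (For $k=1$ the statement is just the normalization.)
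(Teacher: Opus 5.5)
Your proposal is correct and is the same approach as the paper. The paper justifies the corollary only by saying that the argument of Theorem~\ref{prop:weighted-parallel-segments} carries over. Your argument is that proof written out for $k$ segments: you extend a function on the shorter interval by zero and compare the tight-frame bounds $1/c_i$ and $1/c_j$ for the same sequence $\{e^{2\pi i\xi_\lambda x}\}_{\lambda\in\Lambda}$, then apply the normalization.
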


\subsection{Weighted measures on two non-parallel coplanar
line segments}
In this section, we show that the spectrality problem for a weighted
measure supported on two non-parallel coplanar line segments can be reduced,
via an invertible linear transformation, to the corresponding
unweighted problem.

We first recall the standard invariance of spectral measures under
invertible linear transformations. Let \(\sigma\) be a finite Borel
measure on \(\mathbb R^d\), let
\(M:\mathbb R^d\to\mathbb R^d\) be an invertible affine transformation, and set
\[
    \nu=M_{\#}\sigma.
\]
Here, \(M_{\#}\sigma\) denotes the pushforward of \(\sigma\) under \(M\). More precisely, the measure \(\nu=M_{\#}\sigma\) is defined by
\[
    \nu(E)=\sigma\bigl(M^{-1}(E)\bigr)
\]for every Borel set \(E\subset\mathbb R^d\). Equivalently, for   every \(f\in L^1(\nu)\),
\[
    \int_{\mathbb R^d} f(y)\,d\nu(y)
    =
    \int_{\mathbb R^d} f(Mx)\,d\sigma(x).
\]Thus, \(\nu\) is the measure obtained by transporting \(\sigma\) through the  affine transformation \(M\).
By \cite[Lemma~2.3]{LiWang2024}, \(\sigma\) is spectral if and only if
\(\nu\) is spectral. More precisely,
\begin{equation}\label{eq:spectrum-linear-transformation}
    \Lambda \text{ is a spectrum for } \sigma
    \quad\Longleftrightarrow\quad
    M^{-T}\Lambda \text{ is a spectrum for } \nu.
\end{equation}
\begin{proposition}\label{prop:removing-weights}
Let \(I_1,I_2\subset\mathbb R^2\) be two non-parallel line segments,
and let
\[
    \mu
    =
    c_1\mathcal H^1\!\restriction_{I_1}
    +
    c_2\mathcal H^1\!\restriction_{I_2},
\]
with  $ c_1,c_2>0$. Then there exists an invertible linear transformation
\(A:\mathbb R^2\to\mathbb R^2\) such that
\begin{equation}\label{eq:unweighted-pushforward}
    A_{\#}\mu
    =
    \mathcal H^1\!\restriction_{A(I_1)}
    +
    \mathcal H^1\!\restriction_{A(I_2)}.
\end{equation}
\end{proposition}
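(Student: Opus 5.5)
Write $I_j=\{p_j+tv_j:0\le t\le T_j\}$ with $v_j$ a unit vector and $T_j=\mathcal H^1(I_j)$. Because the segments are non-parallel, $v_1$ and $v_2$ form a basis of $\mathbb R^2$. We may therefore define a linear map $A$ by prescribing its values on this basis:
\[
    Av_1=c_1 e_1,\qquad Av_2=c_2 e_2 .
\]
Here $e_1,e_2$ is the standard basis; any pair of linearly independent unit vectors would do. Since $c_1,c_2>0$, the vectors $c_1e_1$ and $c_2e_2$ are linearly independent, so $A$ is invertible.

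Next we compute the pushforward. Let $\varphi\ge 0$ be Borel. Then
\[
    \int\varphi\,d(A_{\#}\mu)
    =\sum_{j=1}^2 c_j\int_0^{T_j}\varphi\bigl(Ap_j+t\,c_je_j\bigr)\,dt .
\]
The image $A(I_j)$ is the segment $\{Ap_j+s e_j:0\le s\le c_jT_j\}$, and its arc-length parameter is $s=c_jt$. Substituting $s=c_jt$, so that $c_j\,dt=ds$, turns the $j$-th term into
\[
    \int_0^{c_jT_j}\varphi(Ap_j+se_j)\,ds=\int\varphi\,d\mathcal H^1\!\restriction_{A(I_j)} .
\]
Summing over $j$ gives \eqref{eq:unweighted-pushforward}. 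The same computation shows more generally that a linear map scaling the direction $v_j$ by a factor $\ell_j$ pushes $\mathcal H^1\!\restriction_{I_j}$ forward to $\ell_j^{-1}\mathcal H^1\!\restriction_{A(I_j)}$. This explains the choice $\ell_j=c_j$, which cancels the weight $c_j$.

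There is no real obstacle. The only point needing care is that the two stretch factors must be chosen independently, and this is exactly where non-parallelism is used: it makes $\{v_1,v_2\}$ a basis. In the parallel case a single direction cannot carry two different scalings, which is consistent with Theorem~\ref{prop:weighted-parallel-segments}.

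Two remarks on how the result is used. First, $A_{\#}\mu$ has total mass $c_1T_1+c_2T_2=1$, so it is again a probability measure. Second, when this is combined with \eqref{eq:spectrum-linear-transformation}, the spectra of $\mu$ are exactly $A^{T}$ applied to the spectra of the unweighted measure.
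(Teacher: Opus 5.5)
Your proof is correct and is essentially the paper's argument. Since non-parallelism makes $v_1,v_2$ a basis, you choose a linear map that scales the direction $v_j$ by exactly $c_j$, then substitute $s=c_jt$ in the pushforward integral; the only difference is that the paper takes $Av_j=c_jv_j$ while you take $Av_j=c_je_j$, which additionally places $A(I_1)$ and $A(I_2)$ on horizontal and vertical lines and so gives, up to a translation, the axis alignment that the paper obtains later via the separate map $S$ in Proposition~\ref{cor:weighted-axis-reduction}.
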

\begin{proof}
Let \(v_1,v_2\) be unit direction vectors of \(I_1,I_2\),
respectively. Since \(I_1\) and \(I_2\) are non-parallel, the vectors
\(v_1,v_2\) are linearly independent and  form a basis of
\(\mathbb R^2\). Hence, there exists a unique linear transformation \(A:\mathbb R^2\to\mathbb R^2\) determined by
\begin{equation}\label{eq:def-A}
    Av_1=c_1v_1 
    \qquad \text{and} \qquad
    Av_2=c_2v_2.
\end{equation}
Since \(c_1,c_2>0\), the transformation \(A\) is invertible.

We claim that, for \(j=1,2\),
\begin{equation}\label{eq:individual-pushforward}
    A_{\#}
    \bigl(c_j\mathcal H^1\!\restriction_{I_j}\bigr)
    =
    \mathcal H^1\!\restriction_{A(I_j)}.
\end{equation}
Write
\[
    I_j=\{p_j+tv_j:0\leq t\leq T_j\},
\]
where  $T_j=\mathcal H^1(I_j)$.
For every \(f\in C_c(\mathbb R^2)\), we have
\[
\begin{aligned}
    \int_{\mathbb R^2} f\,d
    \left[
        A_{\#}
        \bigl(c_j\mathcal H^1\!\restriction_{I_j}\bigr)
    \right]
    &=
    c_j\int_{I_j}f(Ax)\,d\mathcal H^1(x)\\
    &=
    c_j\int_0^{T_j}f(Ap_j+tAv_j)\,dt\\
    &=
    c_j\int_0^{T_j}f(Ap_j+c_jtv_j)\,dt.
\end{aligned}
\]
Making the change of variables \(s=c_jt\), we obtain
\[
     \int_{\mathbb R^2} f\,d
    \left[
        A_{\#}
        \bigl(c_j\mathcal H^1\!\restriction_{I_j}\bigr)
    \right]
    =
    \int_0^{c_jT_j}f(Ap_j+sv_j)\,ds.
\]
Since
\[
    A(I_j)
    =
    \{Ap_j+sv_j:0\leq s\leq c_jT_j\},
\]
we have
\[
    \int_{\mathbb R^2} f\,d
    \left[
        A_{\#}
        \bigl(c_j\mathcal H^1\!\restriction_{I_j}\bigr)
    \right]= \int_{A(I_j)}f\,d\mathcal H^1.
\]
This proves \eqref{eq:individual-pushforward}. Summing the identities
for \(j=1,2\) gives  $$ A_{\#}\mu
    =
    \mathcal H^1\!\restriction_{A(I_1)}
    +
    \mathcal H^1\!\restriction_{A(I_2)}.$$
\end{proof}

\begin{remark}\label{rem:parallel-case}
The non-parallel assumption in Proposition~\ref{prop:removing-weights}
is essential. Suppose that \(I_1\) and \(I_2\) are parallel and have a
common unit direction vector \(v\). For every invertible linear
transformation \(A\),
\[
    A_{\#}
    \bigl(c_j\mathcal H^1\!\restriction_{I_j}\bigr)
    =
    \frac{c_j}{\lvert Av\rvert}
    \mathcal H^1\!\restriction_{A(I_j)},
    \qquad j=1,2.
\]
Hence the ratio of the two transformed densities remains unchanged:
\[
    \frac{c_1/\lvert Av\rvert}{c_2/\lvert Av\rvert}
    =
    \frac{c_1}{c_2}.
\]
Therefore, if \(c_1\neq c_2\), no invertible linear transformation can
simultaneously remove the two weights. This explains the fundamental
distinction between the parallel and non-parallel cases.
\end{remark}

\begin{proposition}[Reduction to the unweighted coordinate-axis model]
\label{cor:weighted-axis-reduction}
Let \(I_1,I_2\subset\mathbb R^2\) be two non-parallel line segments
with lengths
   $ T_j=\mathcal H^1(I_j)$ for $j=1,2$,
and let
\[
    \mu
    =
    c_1\mathcal H^1\!\restriction_{I_1}
    +
    c_2\mathcal H^1\!\restriction_{I_2},
    \qquad c_1,c_2>0.
\]
Then there exists an invertible affine transformation \(T: \mathbb R^2\to\mathbb R^2\) 
such that
\begin{equation}\label{eq:weighted-axis-model}
    T_{\#}\mu
    =
    \mathcal L\!\restriction_{[t_1,t_1+c_1T_1]}
    \times\delta_0
    +
    \delta_0\times
    \mathcal L\!\restriction_{[t_2,t_2+c_2T_2]}
\end{equation}
for some \(t_1,t_2\in\mathbb R\).
Consequently, \(\mu\) is spectral if and only if  \( T_{\#}\mu\) is spectral.
In particular,   every spectrum of \( T_{\#}\mu\) is contained in a straight line  if and only if  every spectrum of \( \mu\) is contained in a straight line.
\end{proposition}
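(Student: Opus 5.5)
We will obtain $T$ by composing the linear map $A$ of Proposition~\ref{prop:removing-weights} with a second affine map that sends the two transformed segments onto the coordinate axes. First apply Proposition~\ref{prop:removing-weights}. With $Av_j=c_jv_j$ we get
\[
A_{\#}\mu=\mathcal H^1\!\restriction_{A(I_1)}+\mathcal H^1\!\restriction_{A(I_2)},
\]
where $A(I_j)=\{Ap_j+sv_j:0\le s\le c_jT_j\}$. These are two segments with the same non-parallel unit directions $v_1,v_2$ and lengths $c_jT_j$.

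Next, let $q$ be the intersection point of the two supporting lines $Ap_1+\mathbb Rv_1$ and $Ap_2+\mathbb Rv_2$. It exists because $v_1,v_2$ are linearly independent. Define the linear map $B$ by $Bv_1=e_1$ and $Bv_2=e_2$, and set
\[
T(x)=B(Ax-q).
\]
This $T$ is affine and invertible, and its linear part $BA$ satisfies $BAv_j=c_je_j$. Since $Ap_j-q\in\mathbb Rv_j$, we can write $Ap_1-q=t_1v_1$ and $Ap_2-q=t_2v_2$. Then
\[
T(p_1+tv_1)=(t_1+c_1t,0),\qquad T(p_2+tv_2)=(0,t_2+c_2t).
\]

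The pushforward is computed by testing against $f\in C_c(\mathbb R^2)$, exactly as in the proof of Proposition~\ref{prop:removing-weights}. We have
\[
c_1\int_0^{T_1}f(t_1+c_1t,0)\,dt=\int_{t_1}^{t_1+c_1T_1}f(s,0)\,ds,
\]
and similarly for the second segment. This gives \eqref{eq:weighted-axis-model}. Alternatively, one checks directly that $B$ maps unit-speed parametrizations along $v_j$ to unit-speed ones along $e_j$, so $B_{\#}$ preserves arc length on each segment.

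For the consequences we invoke \eqref{eq:spectrum-linear-transformation}, i.e.\ \cite[Lemma~2.3]{LiWang2024}, for the affine map $T$ with linear part $M=BA$. This shows that $\Lambda$ is a spectrum of $\mu$ if and only if $M^{-T}\Lambda$ is a spectrum of $T_{\#}\mu$. The translation part of $T$ only multiplies each exponential by a unimodular constant, so it does not affect orthogonality or completeness. In particular $\mu$ is spectral exactly when $T_{\#}\mu$ is.

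Finally, $\Lambda\mapsto M^{-T}\Lambda$ is a bijection between the two families of spectra. Since $M^{-T}$ is an invertible linear map, it sends straight lines to straight lines, and so does its inverse $M^{T}$. Hence every spectrum of $\mu$ lies in a line if and only if every spectrum of $T_{\#}\mu$ does.

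The only delicate point is the translation: the lemma is cited for affine $T$, but if it is only available for linear maps, I would add the one-line computation
\[
e^{2\pi i\langle\lambda,Mx+b\rangle}=e^{2\pi i\langle\lambda,b\rangle}e^{2\pi i\langle M^T\lambda,x\rangle}.
\]
Everything else is routine change of variables.
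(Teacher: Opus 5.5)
Your proof is correct and takes essentially the same route as the paper. You first apply the weight-removing map $A$ of Proposition~\ref{prop:removing-weights}, then an affine map that sends the two transformed segments onto the coordinate axes while preserving length along each direction, and finally use affine invariance of spectra; the only difference is that you build this second map explicitly as $x\mapsto B(x-q)$ and check the pushforward and the unimodular translation factor by hand, whereas the paper cites \cite[Remark~1(III)]{KSW} and \cite[Lemma~2.3]{LiWang2024} for these steps.
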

\begin{proof}
By Proposition~\ref{prop:removing-weights}, there exists an
invertible linear transformation \(A\) such that
\[
    A_{\#}\mu
    =
    \mathcal H^1\!\restriction_{A(I_1)}
    +
    \mathcal H^1\!\restriction_{A(I_2)}.
\]
Moreover,
\[
    \mathcal H^1(A(I_j))=c_jT_j,
    \qquad j=1,2.
\]
Since \(A(I_1)\) and \(A(I_2)\) are non-parallel, it follows from
\cite[Remark~1(III)]{KSW} that there exists an invertible
length-preserving affine transformation \(S\) along the two segment
directions such that
\[
    S(A(I_1))
    \subset\mathbb R\times\{0\},
    \qquad
    S(A(I_2))
    \subset\{0\}\times\mathbb R.
\]
Consequently, for \(T=S\circ A\), we have
\[
    T_{\#}\mu
    =
    \mathcal L\!\restriction_{[t_1,t_1+c_1T_1]}
    \times\delta_0
    +
    \delta_0\times
    \mathcal L\!\restriction_{[t_2,t_2+c_2T_2]}
\]
for some \(t_1,t_2\in\mathbb R\).
The correspondence between the spectra follows from the invariance
of spectrality under invertible affine transformations.
\end{proof}
\begin{proof}[Proof of Theorem \ref{non-p}]  
The proof of the theorem can  be deduced from Proposition  \ref{cor:weighted-axis-reduction} and \cite[Theorem 2]{KSW}. 

\end{proof}
\section{Application}
We conclude with two examples illustrating the application of
our results to weighted measures on two non-parallel line segments.
The first shows how the ratio of the two densities determines
the possible directions of line spectra in the coordinate-axis
model. The second considers two segments with unequal densities
and explicitly constructs a spectrum of the corresponding measure.
\begin{example}[The weighted coordinate-axis model]
\label{ex:weighted-coordinate-axis}

Consider the weighted measure
\begin{equation}\label{eq:weighted-coordinate-measure}
    \mu
    =
    c_1
    \left(
        \mathcal L\!\restriction_{[t_1,t_1+T_1]}
        \times\delta_0
    \right)
    +
    c_2
    \left(
        \delta_0\times
        \mathcal L\!\restriction_{[t_2,t_2+T_2]}
    \right),
    \qquad c_1,c_2>0.
\end{equation}
Let
\[
    A=
    \begin{pmatrix}
        c_1&0\\
        0&c_2
    \end{pmatrix}.
\]
Since \(A\) stretches the \(x\)-axis and the \(y\)-axis by the
factors \(c_1\) and \(c_2\), respectively, a change of variables gives
\begin{equation}\label{eq:unweighted-coordinate-measure}
\begin{aligned}
    \nu:=A_{\#}\mu
    &=
    \mathcal L\!\restriction_
        {[c_1t_1,c_1(t_1+T_1)]}
        \times\delta_0+
    \delta_0\times
    \mathcal L\!\restriction_
        {[c_2t_2,c_2(t_2+T_2)]}.
\end{aligned}
\end{equation}
Thus, the weights \(c_1\) and \(c_2\) are absorbed into the lengths
and locations of the transformed segments. In particular, the
transformed segments have lengths \(c_1T_1\) and \(c_2T_2\),
respectively.

By the invariance of spectrality under invertible linear
transformations, \(\mu\) is a spectral measure with a spectrum $ \Lambda$ if and only if \(\nu\) is a spectral measure with a spectrum $A^{-T}\Lambda$. 

Suppose that \(\nu\) is spectral. By the characterization of
unweighted arc-length measures supported on two non-parallel line
segments in \cite[Theorems~2 and~3]{KSW}, every spectrum of \(\nu\)
containing the origin is contained in one of the two angle bisectors
\[
    L_+=\{(s,s):s\in\mathbb R\}
    \qquad\text{or}\qquad
    L_-=\{(s,-s):s\in\mathbb R\}.
\]
Since \(A^T=A\), the corresponding spectrum of \(\mu\) is contained
in either
\[
\begin{aligned}
    A^TL_+
    =
    \{(c_1s,c_2s):s\in\mathbb R\}  \qquad\text{or}\qquad
    A^TL_-
     =
    \{(c_1s,-c_2s):s\in\mathbb R\}.
\end{aligned}
\]
Equivalently, every spectrum of \(\mu\) containing the origin is
contained in one of the two lines
\begin{equation}\label{eq:weighted-spectral-lines}
    y=\frac{c_2}{c_1}x
    \qquad\text{or}\qquad
    y=-\frac{c_2}{c_1}x.
\end{equation}

Therefore, the ordinary angle bisectors \(y=x\) and \(y=-x\) in the
unweighted frequency model become the weighted angle bisectors in
\eqref{eq:weighted-spectral-lines}. In particular, the weights do
not produce a new type of spectrum. They only change the slopes of
the possible spectral lines through the   transformation \(A^T\).

If \(c_1=c_2\), the two weighted angle bisectors reduce to the
ordinary angle bisectors. If \(c_1\neq c_2\), their slopes become $
    \pm\frac{c_2}{c_1}.$ \end{example}
\begin{remark}\label{rem:weighted-slopes-projection}
  This dependence of the spectral directions on the weights also has
a natural interpretation in terms of the projection principle
developed in \cite[Theorem~1.2]{kolountzakis2025spectrality}. Let
\[
L_\theta=\{s(\cos\theta,\sin\theta):s\in\mathbb R\} 
\]
with $ \sin\theta\cos\theta\ne0$. Orthogonal projection onto \(L_\theta\) scales the lengths of the horizontal and vertical segments by \(|\cos\theta|\) and \(|\sin\theta|\), respectively. The corresponding projected components therefore have densities
\[
\frac{c_1}{|\cos\theta|}
\qquad\text{and}\qquad
\frac{c_2}{|\sin\theta|}.
\]
Since an absolutely continuous spectral measure must have constant
density on its support, these two densities must agree. Therefore,
\[
    \frac{c_1}{\lvert\cos\theta\rvert}
    =
    \frac{c_2}{\lvert\sin\theta\rvert},
\]
and consequently $ \lvert\tan\theta\rvert =\frac{c_2}{c_1}$.
Thus, from the projection viewpoint, the weights determine the
spectral direction because the projection must compensate for the
different densities on the two segments. This gives precisely the
two weighted angle bisectors obtained above from the 
transformation \(A^T\).
\end{remark}
 
 \begin{example} 
\label{ex:weighted-nonparallel}

Let
\[
    I_1=\{(s,0):0\leq s\leq 1\}
\]
and
\[
    I_2=
    \left\{
        \frac{s}{\sqrt{2}}(1,1):0\leq s\leq 1
    \right\}.
\]
Thus, \(I_1\) and \(I_2\) are two non-parallel line segments of
length \(1\). Consider the weighted probability measure
\begin{equation}\label{eq:weighted-example}
    \mu
    =
    \frac{1}{4}\mathcal H^1\!\restriction_{I_1}
    +
    \frac{3}{4}\mathcal H^1\!\restriction_{I_2}.
\end{equation}

Let
\[
    v_1=(1,0) 
    \qquad  \text{and}  \qquad
    v_2=\frac{1}{\sqrt{2}}(1,1)
\]
be unit direction vectors of \(I_1\) and \(I_2\), respectively.
Define the linear transformation \(A=
    \begin{pmatrix}
        \frac{1}{2}&-\frac{1}{2}\\[2mm]
        0&\frac{3\sqrt{2}}{2}
    \end{pmatrix} \) by
\[
    Av_1=(\frac{1}{2},0)
    \qquad \text{and}
 \qquad   Av_2=(0,\frac{3}{2}).
\]
It follows that
\[
    A(I_1)=\left[0,\frac{1}{2}\right]\times\{0\}
 \qquad \text{and} \qquad  A(I_2)=\{0\}\times\left[0,\frac{3}{2}\right].
\]
Since \(A\) stretches the tangential directions of \(I_1\) and
\(I_2\) by the factors \(1/2\) and \(3/2\), respectively, we obtain
\[
\begin{aligned}
    \nu:=A_{\#}\mu
     =
    \frac{1}{2}
    \mathcal L\!\restriction_{[0,1/2]}\times\delta_0 
     +
    \frac{1}{2}
    \delta_0\times
    \mathcal L\!\restriction_{[0,3/2]}.
\end{aligned}
\]
This is an unweighted coordinate-axis measure of the form considered
in \cite[Theorem~2]{KSW}, with
\[
    t_1=t_2=0,
    \qquad
    T_1=\frac{1}{2},
    \qquad
    T_2=\frac{3}{2}.
\]
Notice that
\[
    T_1+T_2=2,
    \qquad
    T_1\neq T_2,
    \qquad
    t_1+t_2=0\in2\mathbb Z.
\]
Therefore, \(\nu\) is a spectral measure. Moreover, by
\cite[Theorem~2(II)]{KSW}, its unique spectrum containing the origin
is
\[
    \Gamma
    =
    \left\{
        \left(\frac{n}{2},-\frac{n}{2}\right):
        n\in\mathbb Z
    \right\}.
\]

Since \(\nu=A_{\#}\mu\), the corresponding spectrum of the original
weighted measure \(\mu\) is
$ \Lambda=A^T\Gamma$.
A direct calculation gives
\[
\begin{aligned}
    A^T
    \begin{pmatrix}
        n/2\\[1mm]
        -n/2
    \end{pmatrix}
     =
    \begin{pmatrix}
        \frac{1}{2}&0\\[1mm]
        -\frac{1}{2}&\frac{3\sqrt{2}}{2}
    \end{pmatrix}
    \begin{pmatrix}
        n/2\\[1mm]
        -n/2
    \end{pmatrix} =
    \begin{pmatrix}
        n/4\\[1mm]
        -(1+3\sqrt{2})n/4
    \end{pmatrix}.
\end{aligned}
\]
Hence
\begin{equation}\label{eq:weighted-example-spectrum}
    \Lambda
    =
    \left\{
        \left(
            \frac{n}{4},
            -\frac{(1+3\sqrt{2})n}{4}
        \right):
        n\in\mathbb Z
    \right\}
\end{equation}
is a spectrum for \(\mu\).
This means \(\Lambda\) is contained in the straight line
 $y=-(1+3\sqrt{2})x.$
Furthermore, every spectrum of \(\mu\) is a translate of the set
in \eqref{eq:weighted-example-spectrum}.
\end{example}
\section*{Acknowledgments}
The author is grateful to Mihail N. Kolountzakis for reading the manuscript and for his valuable suggestions.


\bibliographystyle{alpha}

\begin{thebibliography}{KLM999}

\bibitem{ai2023spectrality}  W. H. Ai, Z. Y. Lu and T. Zhou.  The spectrality of symmetric additive measures. {\em Comptes Rendus. Math.}, 361: 783-793, 2023.

\bibitem {AHH19} L. X. An, L. He and X. G. He. Spectrality and non-spectrality of the Riesz product measures with three elements in digit sets. {\em J. Funct. Anal.}, 277: 255-278, 2019.

 \bibitem{AH14} L. X. An and X. G. He. A class of spectral Moran measures, {\em J. Funct. Anal.}, 266: 343-354, 2014.


\bibitem {AHL22} L. X. An, X. G. He and C. K. Lai. Classification of spectral self-similar measures with four-digit elements. {\em Asian J. Math.}, 27: 467-492, 2023.

\bibitem{DHL2013} X. R. Dai, X. G. He and C. K. Lai. Spectral property of Cantor measures with consecutive digits. {\em Adv. Math.},   242:  187-208, 2013.

\bibitem{DHL2014} X. R. Dai, X. G. He and K. S. Lau. On spectral N-Bernoulli measures. {\em Adv. Math.},  259:  511-531, 2014.

 
\bibitem {DL21} Q. R. Deng and M. T. Li. Spectrality of Moran-type self-similar measures on $\mathbb R$. {\em J. Math. Anal. Appl.},  506: 125547, 2022.

\bibitem {DL23} Q. R. Deng and M. T. Li, Spectrality of Moran-type Bernoulli convolutions. {\em Bull. Malays. Math. Sci. Soc.},   46: 136, 2023.



\bibitem{ducasse2025spectral} B. Ducasse, D. Dutkay and  C. Fernandez . Spectral properties of unions of intervals and groups of local translations. {\em ArXiv Preprint ArXiv:2506.18625}, 2025.

 \bibitem{DHL2019} D. Dutkay, J. Haussermann and C.K. Lai. Hadamard triples generate self-affine spectral measures. {\em  Trans. Am. Math. Soc.},   371:  1439-1481, 2019.
 
\bibitem{DL2014}D. E. Dutkay and C. K. Lai. Uniformity of measures with Fourier frames. {\em Adv.
Math.}, 252: 684-707, 2014.


\bibitem{FFS16} A. H. Fan, S. L. Fan and R. X. Shi. Compact open spectral sets in $ \mathbb{ Q}_p$. {\em J. Funct. Anal.},  271: 3628-3661, 2016.
\bibitem {FY97} D. J. Feng, Z. Y. Wen and J. Wu. Some dimensional results for homogeneous Moran sets. {\em Sci. China Math.},  40:  , 475-482, 1997.




\bibitem{farkas2006onfuglede} B. Farkas, M. Matolcsi and P. M\'{o}ra. On Fuglede's conjecture and the existence of universal spectra. {\em J. Fourier Anal. Appl.}, 12(5): 483-494, 2006.

\bibitem{farkas2006tiles} B. Farkas and Sz. Gy. R\'ev\'esz. Tiles with no spectra in dimension 4. {\em Math. Scand.},
98(1): 44-52, 2006.


 \bibitem{FHL15} X.Y. Fu, X.G. He and K.S. Lau. Spectrality of self-similar tiles. {\em Constr. Approx.},   42: 519-541, 2015.
  \bibitem{FHW18} Y.S. Fu, X.G. He and Z.Y. Wen. Spectra of B ernoulli convolutions and random convolutions. {\em J. Math. Pures Appl.},  116:105-131, 2018.
  
\bibitem{fuglede1974operators} B. Fuglede. Commuting self-adjoint partial differential operators and a group theoretic problem.  {\em J. Funct.  Anal.}, 16: 101-121, 1974.



\bibitem{JorgensenPedersen1998}
P. E. T. Jorgensen and S. Pedersen.
Dense analytic subspaces in fractal \(L^2\)-spaces.
{\em J. Anal. Math.}, 75:185-228, 1998.
 

\bibitem{kolountzakis2025non}  M. N. Kolountzakis and C. K. Lai.  Non-spectrality of some piecewise smooth curves and unions of line segments. {\em ArXiv Preprint ArXiv: 2507.00581},  2025.

 
\bibitem{kolountzakis2006tiles} M. N. Kolountzakis and M. Matolcsi. Tiles with no spectra. {\em Forum Math.}, 18:519-528, 2006

\bibitem{kolountzakis2006hadamard} M. N. Kolountzakis and M. Matolcsi. Complex Hadamard matrices and the spectral set conjecture. {\em Collect. Math.}, 57:281-291, 2006. 

\bibitem{KSW} M. N. Kolountzakis, R. X. Shi and S. Wu, Spectra for finite unions of line segments. ArXiv Preprint ArXiv:  2512.19872, 2025.
\bibitem{kolountzakis2025spectrality} M. N. Kolountzakis and  S. Wu. Spectrality of a measure consisting of two line segments. {\em J. Fourier Anal. Appl.}, 32(4), 2026.  


\bibitem{laba2001}  I. {\L}aba. Fuglede's conjecture for a union of two intervals. {\em Proc. Amer. Math. Soc.}, 129: 2965-2972, 2001.

\bibitem{lai2021spectral} C. K. Lai, B. C. Liu and H. Prince.  Spectral properties of some unions of linear spaces. {\em J. Funct. Anal.}, 280: 108985, 2021.

\bibitem{LiWang2024} W. X. Li and Z. Q. Wang. The spectrality of infinite convolutions in  $\mathbb{R}^d$. {\em  J. Fourier Anal. Appl.}, 30(3), 2024 

\bibitem{LevMatolcsi2022}
N. Lev and M. Matolcsi.
The Fuglede conjecture for convex domains is true in all dimensions.
{\em Acta Math.}, 228(2):385-420, 2022.

\bibitem{tao2004fuglede}
T. Tao. Fuglede’s conjecture is false in 5 and higher dimensions. {\em  Math. Res. Lett.}, 11(23): 251-258, 2004.

\bibitem{Zhang2026}
T. Zhang.
Both directions of Fuglede's conjecture fail in dimension two.
{\em ArXiv Preprint arXiv:2607.15632}, 2026.
 
\end{thebibliography}

\end{document}